\newtheorem{theor}{Theorem}[section]
\newtheorem{remar}[theor]{Remark}
\newtheorem{corol}[theor]{Corollary}
\newtheorem{lem}[theor]{Lemma}
\newtheorem{guess}{Conjecture}
\newcommand{\Z}{\mathbb{Z}}
\newcommand{\CP}{\mathds{C}\mathrm{P}}
\newcommand{\dd}{\mathrm{D}}
\newcommand{\R}{\mathbb{R}}
\newcommand{\C}{\mathbb{C}}
\newcommand{\N}{\mathbb{N}}
\newcommand{\K}{K\"{a}hler\ }
\newcommand{\zbar}{\overline{z}}
\renewcommand{\div}{\textrm{div}}
\newcommand{\Ric}{\textrm{Ric}}
\begin{document}
\title[]{Two conjectures on Ricci-flat K\"{a}hler metrics}

\author{Andrea Loi, Filippo Salis, Fabio Zuddas}
\address{Dipartimento di Matematica e Informatica, Universit\`a di Cagliari\\Via Ospedale 72, 09124 Cagliari (Italy)}
\email{loi@unica.it, filippo.salis@gmail.com, fabio.zuddas@unica.it}

\thanks{The first author was  supported by Prin 2015 -- Real and Complex Manifolds; Geometry, Topology and Harmonic Analysis -- Italy,  by INdAM. GNSAGA - Gruppo Nazionale per le Strutture Algebriche, Geometriche e le loro Applicazioni and also by  GESTA - Funded by Fondazione di Sardegna and Regione Autonoma della Sardegna.}
\subjclass[2010]{53C55; 58C25;  58F06} 
\keywords{K\"ahler manifolds; TYZ asymptotic expansion; Ricci-flat metrics; projectively induced metrics}

\begin{abstract}
We propose  two conjectures about Ricci-flat \K  metrics:

\noindent
Conjecture 1:
{\em A Ricci-flat projectively induced metric is flat.}

\noindent
Conjecture 2:
{\em A Ricci-flat metric on an $n$-dimensional complex manifold such that  the $a_{n+1}$ coefficient of the TYZ expansion vanishes is flat.}

We verify Conjecture 1 (see Theorem \ref{theomain}) under the assumptions  that the metric  is radial and  stable-projectively induced and Conjecture 2 (see Theorem \ref{theomain2}) for complex surfaces whose metric is either radial 
or complete and ALE.
We end the paper  by showing, by means of the Simanca metric, that the assumption of Ricci-flatness in Conjecture \ref{guess1} and in Theorem \ref{theomain2} cannot be weakened to scalar-flatness (see Theorem \ref{theomain3}).
\end{abstract}
 
\maketitle
\tableofcontents

\section{Introduction}
An interesting open question in K\"ahler geometry is concerned with the characterization of K\"ahler-Einstein projectively induced metrics. Here a K\"ahler metric $g$ on a complex manifold $M$ is said to be \emph{projectively induced} if there exists a K\"ahler (isometric and holomorphic) immersion of $(M, g)$ into the complex projective space $(\C P^N, g_{FS})$, $N \leq +\infty$,  endowed with the Fubini--Study metric $g_{FS}$, namely the metric whose associated \K form is given in homogeneous coordinates  by  $\omega_{FS}=\frac{i}{2}\partial\bar\partial\log (|Z_0|^2+\cdots +|Z_N|^2)$.

When $N$ is finite, the only known examples of complete K\"ahler-Einstein projectively induced metrics are compact and  homogeneous
and it is still an open problem to show that these are the only possibilities (see \cite{Ch}, \cite{Sm}, \cite{Ha}, \cite{AL2}, \cite{ALZ}, \cite{Ta}).
 Indeed one can prove (see, e.g. \cite{DHL}) that, given a compact simply connected homogeneous K\"ahler (Einstein) manifold   $M$  with integral K\"ahler form $\omega$, then the Kodaira map $k: M \rightarrow \C P^N$  suitably normalized is a K\"ahler immersion. 
One can see \cite{DHL} that the assumptions of simply-connectedness and compactness of $M$ and the integrality of $\omega$ are necessary (this excludes for example the compact flat torus to be projectively induced). Moreover, if  the \K form  $\omega$ of a compact 
and  simply-connected homogeneous  \K manifold  $(M ,g)$ is integral then there exists a positive integer $k$ such that $kg$ is projectively induced (see \cite{Ta} for a proof based on semisimple Lie groups and Dynkin diagrams). This last  assertion is valid also for noncompact  simply-connected homogeneous K\"ahler (Einstein) manifolds by considering instead of the Kodaira map the coherent states map (coming from the theory of geometric quantization) and  by allowing the ambient space to be infinite dimensional, namely by considering \K immersion into $\CP^{\infty}$ (see \cite{loimossaber}).
Nevertheless  there exist complete and nonhomogeneous projectively induced   K\"ahler-Einstein
metrics on Cartan-Hartogs domains  with {\em negative} (constant) scalar curvature  (see \cite{LZ}).

Notice that in the noncompact case, due for example to the fact that $\lambda \omega$ is always integral provided $M$ is contractible, the structure of the set of the positive real numbers $\lambda \in \R^+$ for which $\lambda g$ is projectively induced is in general less trivial than in the compact case (where it is always discrete). For example, in the noncompact symmetric case one has the following (see also \cite{loimossahom} for the more general case of bounded homogeneous domains):

\vskip 0.3cm
\noindent
{\bf Theorem A} (Theorem 2 in \cite{LZ})
{\em Let $\Omega$ be an irreducible bounded symmetric domain endowed with its
Bergman metric $g_B$. Then there exist a positive real number $a$ and an integer $k$ (both depending on $\Omega$) such that $(
\Omega, \lambda g_B)$ admits an equivariant K\"ahler immersion into $\C P^{\infty}$ if
and only if $\lambda$
 belongs to the set
$$\left\{ a, 2a, \dots, ka \right\} \cup (ka, +\infty).$$}

From this theorem it follows that the only irreducible bounded symmetric domain where $\lambda g_B$ is projectively induced for all $\lambda > 0$ is the complex hyperbolic space. 
More generally, for a homogeneous bounded domain $(\Omega, g)$ we have that $\lambda g$ is projectively induced for all $\lambda > 0$ if and only if $(\Omega, g) = \C H^{n_1}_{\lambda_1} \times \cdots \times \C H^{n_r}_{\lambda_r}$, where $\C H^{n_j}_{\lambda_j} = (\C H^{n_j}, \lambda_j g_{hyp})$ (\cite{DHL}, Theorem 4).

Inspired by these results, we give the following definition:

{\em A \K  metric $g$  is said to be  {\em stable-projectively induced} if there exists $\epsilon >0$ such that $\lambda g$ is projectively induced for all $\lambda \in (1-\epsilon, 1+\epsilon)$. A \K metric is said to be  {\em unstable} if it is not stable-projectively induced.}

Obviously a \K metric on a compact complex manifold is always unstable and Theorem A shows 
that there exists metrics $g$ which are projectively induced and unstable and which  become stable-projectively induced by multiplying them for a suitable constant. Notice also that the flat metric $g_0$ on the complex Euclidean space $\C^n$ is stable-projectively induced by the map $\Psi: \C^n \rightarrow \C P^{\infty}, \ \Psi(z) = \left( \dots, \sqrt{\frac{1}{m_j!}} z^{m^j} , \dots \right)$ (see \cite{Cal}). Consequently, many examples of stable-projectively induced metrics can be constructed on those complex manifolds $M$ which admit 
a holomorphic immersion into $\C^n$ (e.g. Stein manifolds) by simply taking the restriction of the flat metric $g_0$ to  $M$.

For the case of Ricci-flat metrics, namely  K\"ahler-Einstein metrics with Einstein constant zero,  D. Hulin  \cite{hulinlambda} proves that a compact K\"ahler-Einstein manifold K\"ahler immersed into $\C P^N$ has positive scalar curvature. This
result implies for example that a Calabi-Yau manifold
does not admit a K\"ahler immersion into $\C P^N$. 
On the other hand there are many interesting examples of Ricci-flat metrics on noncompact complex manifolds, for example the celebrated  {\it Taub-NUT metric}, defined as the family (constructed by C. Le Brun) of complete K\"ahler forms on $\C^2$ given by $\omega_m = \frac{i}{2} \partial \bar \partial \Phi_m$, for $m \geq 0$, where $\Phi_m(u, v) = u^2 + v^2 + m(u^4 + v^4)$ and $u$ and $v$ are implicitly defined by $|z_1| = e^{m(u^2+v^2)}u$, $|z_2| = e^{m(v^2-u^2)}v$ (notice that for $m=0$ one gets the flat metric on $\C^2$). Then one can prove \cite{taubnut} that for $m > \frac{1}{2}$ there does not exist a K\"ahler immersion of $(\C^2, \omega_m)$ into $\C P^{\infty}$.

Thus, we believe the validity of the following conjecture:

\begin{guess}\label{guess1}
A Ricci-flat projectively induced metric is flat.
\end{guess}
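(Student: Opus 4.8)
The plan is to reduce the conjecture to a pointwise curvature computation and then to extract enough rigidity from Calabi's inducibility criterion to force the curvature to vanish. Since a projectively induced metric is the pull-back of the real-analytic Fubini--Study form, $g$ is real-analytic, so around every point $p\in M$ Calabi's diastasis $D_p$ is a well-defined local \K potential for $g$. Choosing Bochner (\K-normal) holomorphic coordinates centred at $p$, one has the expansion $D_p(z,\bar z)=|z|^2-\tfrac14\sum_{i,j,k,l}R_{i\bar jk\bar l}(p)\,z_iz_k\bar z_j\bar z_l+O(|z|^5)$, where $R$ is the curvature of $g$ and $|z|^2=\sum_i|z_i|^2$. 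Because $p$ is arbitrary, it suffices to prove $R(p)=0$.

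First I would invoke Calabi's criterion: $(M,g)$ admits a \K immersion into $\CP^\infty$ if and only if the Hermitian matrix of Taylor coefficients of $e^{D_p}-1$ is positive semidefinite. Expanding $e^{D_p}-1=|z|^2+\big(\tfrac12|z|^4-\tfrac14\sum R_{i\bar jk\bar l}(p)z_iz_k\bar z_j\bar z_l\big)+O(|z|^5)$ and reading off the block of bidegree $(2,2)$, positive semidefiniteness becomes a matrix inequality on $\mathrm{Sym}^2(\C^n)$; writing $Q_R$ for the Hermitian form determined by the curvature tensor and $Q_4\succeq0$ for the form coming from $\tfrac12|z|^4$, this inequality reads $Q_R\preceq 2Q_4$. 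Equivalently, this is just the Gauss equation $R^{M}\le R^{FS}$ for a \K submanifold, the defect being the squared norm of the second fundamental form of the immersion; in either form it bounds the holomorphic bisectional curvature of $g$ from above.

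Next I would bring in Ricci-flatness. In the chosen coordinates it is the Monge--Amp\`ere equation $\det\big(\partial^2 D_p/\partial z_i\partial\bar z_j\big)\equiv\mathrm{const}$, whose second-order jet yields $\Ric_{i\bar j}(p)=\sum_k R_{i\bar jk\bar k}(p)=0$; in particular the scalar curvature vanishes, so $\mathrm{tr}\,Q_R=0$. The target is then the implication ``positive semidefinite $+$ trace zero $\Rightarrow$ zero'': if the inducibility constraint could be upgraded from the upper bound $Q_R\preceq 2Q_4$ to the lower bound $Q_R\succeq0$, then the vanishing trace would force $Q_R=0$, hence $R(p)=0$ and flatness. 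A parallel and arguably cleaner route is through the TYZ expansion itself: for a Ricci-flat metric the coefficients collapse to $a_1=0$ and $a_2=c\,|R|^2$ with $c>0$, so $R\equiv0$ is equivalent to the vanishing of a single nonnegative curvature quantity -- exactly the prototype of the rigidity underlying the companion conjecture on the vanishing of the $a_{n+1}$ coefficient (Theorem \ref{theomain2}).

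The main obstacle is that Calabi's criterion, like the Gauss equation, produces only an \emph{upper} bound on the curvature operator and never a pointwise lower bound -- indeed projectively induced metrics may have strongly negative curvature, as Theorem A shows for the Bergman metrics of bounded symmetric domains -- so the single bidegree-$(2,2)$ inequality does not by itself make $Q_R$ positive semidefinite and the trace-zero argument fails to close. Overcoming this is the crux: one must exploit the positivity at all bidegrees $(m,m)$ simultaneously, coupled to the full Monge--Amp\`ere equation rather than to a single jet, so as to convert the infinite hierarchy of upper bounds into the rigidity $R\equiv0$. It is precisely here that a structural hypothesis becomes decisive -- radial symmetry collapses the whole system to a single ordinary differential equation in $|z|^2$, while stability of the immersion under rescaling propagates the constraints across a one-parameter family -- which is what makes the argument go through in Theorem \ref{theomain}; running the all-orders estimate in full generality, without such a hypothesis, is the step I expect to be genuinely hard.
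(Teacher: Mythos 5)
You have not given a proof, and to your credit you say so explicitly: the argument stops exactly at the point you flag. Concretely, the only constraint you extract from Calabi's criterion is positivity of the bidegree-$(2,2)$ principal block of the coefficient matrix of $e^{D_p}-1$, which in Bochner coordinates reads $Q_R\preceq 2Q_4$, an \emph{upper} bound on curvature, while Ricci-flatness gives $\mathrm{tr}\,Q_R=0$. These two facts are mutually compatible for many non-zero curvature tensors, so nothing follows; the implication ``positive semidefinite plus trace zero implies zero'' would require $Q_R\succeq 0$, which neither Calabi's criterion nor the Gauss equation supplies (projectively induced metrics can have strictly negative curvature, as Theorem A shows, and as you yourself observe). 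Worse, no finite-order jet argument at a single point can ever close this gap: the flat metric satisfies every finite block of Calabi's positivity conditions \emph{strictly}, so any jet sufficiently close to flat -- in particular the jet at a point of small curvature of a genuinely non-flat Ricci-flat metric -- also satisfies them, with trace zero. Any proof must therefore combine the full infinite hierarchy of constraints with global information about the metric, and your outline does not indicate how to do that.

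For context, the statement you were asked to prove is Conjecture \ref{guess1}, which the paper itself leaves open; it is verified there only for radial, stable-projectively induced metrics (Theorem \ref{theomain}), and your closing paragraph correctly anticipates both hypotheses and their role. It is worth recording how the paper actually uses them, since it is not via the $(2,2)$ jet. Lemma \ref{projind} and Corollary \ref{notprojind} specialize Calabi's hierarchy to radial potentials: if $g$ is projectively induced, then the functions $g_h(x)=e^{-f(x)}\frac{d^h e^{f(x)}}{dx^h}$ must be nonnegative for every $h\in\N$, these being (up to positive factors) diagonal entries of Calabi's matrix at a point away from the origin. Calabi's classification of radial Ricci-flat potentials (Lemma \ref{radialcalabi}) gives $f'=\lambda(\epsilon x^{-n}+1)^{1/n}$, and explicit computation then shows $g_3(x)\to-\infty$ as $x\to 1^+$ when $\epsilon=-1$, and $g_{[\lambda]+2}(x)\to-\infty$ as $x\to 0^+$ when $\epsilon=1$ and $\lambda\notin\Z$; stability under rescaling disposes of the remaining integer values of $\lambda$. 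So the rigidity comes precisely from the ``all bidegrees at once'' escalation you describe, but implemented as explicit one-variable computations made possible by the radial ODE -- the step your proposal names as the crux but does not attempt. Finally, note that your side remark that Ricci-flatness reduces flatness to the vanishing of $a_2=\frac{1}{24}|R|^2$ pertains to Conjecture \ref{guess2} and Theorem \ref{theomain2}, where the vanishing of a TYZ coefficient is an additional \emph{hypothesis}, not a consequence of projective inducibility, so it cannot substitute for the missing step here.
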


In this paper we verify Conjecture \ref{guess1} under the assumptions that the metric involved  is stable-projectively induced and restricting ourselves to {\it radial} K\"ahler metrics, i.e. those admitting a  K\"ahler potential $\Phi$ which depends only on the sum $|z|^2 = |z_1|^2 + \cdots + |z_n|^2$ of the local coordinates' moduli. Our first result is then the following:

\begin{theor}\label{theomain}
The only Ricci-flat, stable-projectively induced and  radial K\"ahler metric is the flat one.
\end{theor}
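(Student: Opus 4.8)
The plan is to combine the Calabi criterion for projective induction with the ordinary differential equation that Ricci-flatness imposes on a radial potential. Writing $\omega=\tfrac{i}{2}\partial\bar\partial\Phi(x)$ with $x=|z|^2=\sum_j|z_j|^2$, one computes $g_{i\bar j}=\Phi'(x)\delta_{ij}+\Phi''(x)\bar z_iz_j$ and hence $\det(g_{i\bar j})=\Phi'(x)^{n-1}\bigl(x\Phi'(x)\bigr)'$. A radial real-valued pluriharmonic function is constant, so the Ricci-flat condition $\partial\bar\partial\log\det(g_{i\bar j})=0$ is equivalent to $\det(g_{i\bar j})\equiv c$ for some constant $c>0$. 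Substituting $P=(\Phi')^n$ converts this into the linear equation $xP'+nP=nc$, whose general solution is $(\Phi'(x))^n=c+Cx^{-n}$ for a real constant $C$. The flat metric is exactly the case $C=0$ (then $\Phi'$ is constant and $\Phi$ is affine), so the entire content of the theorem is to prove that stable projective induction forces $C=0$.

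First I would dispose of the case in which the centre $z=0$ of the radial symmetry lies in $M$: there projective induction makes $\Phi$ real-analytic at $0$, and finiteness of $(\Phi')^n$ at $x=0$ already rules out the $x^{-n}$ term, so $C=0$. The substance therefore lies in the remaining, ALE-type situation, where $\Phi$ is defined only for $x$ in a subinterval of $(0,+\infty)$ and $C\neq0$ is a priori admissible. Here I would invoke Calabi's criterion: $\lambda g$ is projectively induced into $\CP^\infty$ if and only if, at one (hence every) base point $p$, the kernel $e^{\lambda D_p(z,\bar w)}$ is of positive type, where $D_p$ is the diastasis, given for a radial potential by $D_p(z)=\Phi(z\bar z)+\Phi(p\bar p)-\Phi(z\bar p)-\Phi(p\bar z)$ in terms of the analytically continued $\Phi$. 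Stability then says that the Taylor coefficients of $e^{\lambda D_p}$ about $p$ form a positive semidefinite array for every $\lambda\in(1-\epsilon,1+\epsilon)$.

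I see two complementary levers on this hypothesis. Since the Hadamard product of positive-type kernels is of positive type and $e^{(\lambda_1+\lambda_2)D_p}=e^{\lambda_1D_p}\,e^{\lambda_2D_p}$, the set $\{\lambda>0:\lambda g\text{ is projectively induced}\}$ is closed under addition; containing an interval, it therefore contains a whole ray $(\lambda_0,+\infty)$, so $\lambda g$ is projectively induced for all large $\lambda$. This is precisely the regime of the TYZ expansion that the present paper develops, and it is the natural engine for the conclusion: projective induction relates the expansion of Rawnsley's function to the squared norm of the immersion and thereby constrains the coefficients $a_i$. For a Ricci-flat metric these collapse, through $a_1=\tfrac12\,\mathrm{scal}=0$, to curvature norms — in particular $a_2=\tfrac1{24}\bigl(|R|^2-4|\Ric|^2+3\,\mathrm{scal}^2\bigr)=\tfrac1{24}|R|^2\ge0$ — and I would show that the projective-induction constraint is compatible with Ricci-flatness only when $R\equiv0$, i.e. $C=0$. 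Alternatively, since each Taylor coefficient of $e^{\lambda D_p}$ is a polynomial in $\lambda$, nonnegativity on an interval permits differentiation in $\lambda$ to isolate the curvature-type coefficients directly from the explicit profile $(\Phi')^n=c+Cx^{-n}$.

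The main obstacle is this final sign analysis. Morally the mechanism is clear: Ricci-flatness makes the curvature tensor trace-free, hence sign-indefinite unless it vanishes, whereas projective induction demands a definite positivity of exactly the curvature-carrying coefficients, and the two can coexist only in the flat case. Making this quantitative is delicate, because for $C\neq0$ the diastasis is not elementary (it acquires a logarithmic singularity as $x\to0$), so expanding $e^{\lambda D_p}$ about a base point and tracking the offending coefficient, while checking that it retains the wrong sign along the whole interval (or ray) of admissible $\lambda$, requires real work. Reducing the infinite positive-semidefiniteness condition to a one-variable problem in $x$ by exploiting the $\U(n)$-equivariance of the radial structure is the step I expect to be most demanding, and it is where Ricci-flatness must enter quantitatively rather than merely qualitatively.
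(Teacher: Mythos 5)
Your reduction to the ODE is exactly the paper's Lemma \ref{radialcalabi}: Ricci-flatness of a radial potential forces $(\Phi')^n=\tilde c+Cx^{-n}$, and after normalization the only non-flat candidates for $n\geq 2$ are $f=\lambda f_\epsilon$ with $f_\epsilon(x)=\int(\epsilon x^{-n}+1)^{1/n}dx$, $\epsilon=\pm 1$. Your semigroup observation (Schur products of positive-type kernels make the admissible set of $\lambda$'s closed under addition) is correct but not needed: stability already hands you an interval of admissible $\lambda$'s, and that is all the paper uses.

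The gap is the final step, which you explicitly defer: you never exhibit the quantity whose sign fails, and the route you sketch would not produce one. For a Ricci-flat metric $a_1=0$ and $a_2=\frac{1}{24}|R|^2\geq 0$ automatically, so nonnegativity of the low TYZ coefficients yields no obstruction, and there is no general principle forcing sign conditions on the $a_j$'s from projective induction; likewise, nonnegativity of a polynomial in $\lambda$ on an interval does not let you isolate individual coefficients by differentiation. The paper's actual mechanism is a refinement of Calabi's criterion for radial potentials on domains \emph{not containing the origin} (Lemma \ref{projind} and Corollary \ref{notprojind}): after reordering the multi-indices, Calabi's matrix at $p=(s,0,\dots,0)$ contains diagonal blocks whose entries are positive multiples of
$$g_h(x)=e^{-f(x)}\,\frac{d^{h}e^{f(x)}}{dx^{h}},$$
so projective induction forces $g_h\geq 0$ for every $h$. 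One then computes: for $f=\lambda f_{-1}$ one has $g_3(x)\to-\infty$ as $x\to 1^{+}$ for every $\lambda>0$; for $f=\lambda f_{1}$ an inductive formula $g_h(x)=\frac{\lambda}{x^{h}}\big(\Psi\prod_{j=1}^{h-1}(\lambda\Psi-j)+\varphi_h x\big)$ with $\Psi=(x^{n}+1)^{1/n}$ gives $g_{[\lambda]+2}(x)\to-\infty$ as $x\to 0^{+}$ whenever $\lambda\notin\Z$. Since every interval contains non-integers, no interval of multiples can be projectively induced unless $\epsilon=0$. Without this (or an equivalent) explicit negativity, your argument does not close; the ``sign analysis'' you flag as the main obstacle is in fact the entire content of the proof.
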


Notice that without assuming the Ricci-flatness the thesis of the previous theorem does not hold. For  example  the radial non-flat \K metric $g=\frac{i}{2}\partial\bar\partial (|z|^2+|z|^4)$ on $\C$
is stable-projectively induced being  the pull-back of the flat metric on $\C^2$ via the embedding $z\mapsto (z, z^2)$.

The requirement that a \K metric is projectively induced is a somehow strong assumption.
Thus it is natural to try to approximate a \K metric $g$ on a complex manifold $M$ with  projectively induced ones.
In the last two decades a lot of work has been done in this direction both in the noncompact and compact case.
Roughly speaking, if the \K form $\omega$ associated to $g$ is integral, then for every positive integer $m$ one can construct  a  holomorphic map $\varphi_m: M\rightarrow \CP^{N_m}$ into an 
$N_m$-dimensional ($N_m\leq \infty$) complex projective space such that 
$$\lim_{m\rightarrow\infty}\frac{1}{m}\varphi_m^*g_{FS}=g.$$
More precisely,  under suitable assumptions (automatically satisfied in the compact case) (see, e.g. \cite{arezzoloi})
there exists a smooth function $\epsilon_{mg}$ on $M$, depending on $m$ and on the metric $g$, such that 
$$\varphi_m^*\omega_{FS}=m\omega+\frac{i}{2}\partial\bar\partial\log\epsilon_{m g}$$
and admitting the so called {\em Tian--Yau--Zelditch expansion} (TYZ in the sequel)
\begin{equation}\label{TYZ}
\epsilon_{m g}(x)\sim \sum_{j=0}^\infty a_j(x)m^{n-j},
\end{equation}
where $a_0(x)=1$ and $a_j(x)$, $j=1,\dots$ are smooth functions on $M$ depending on the curvature 
and its covariant derivatives at $x$ of the metric $g$ (see \cite{ze} for details).
In particular, Z. Lu \cite{lu} computed the first three coeffcients: 
\begin{equation}\label{coefflu}
\left\{\begin{array}
{l}
a_1(x)=\frac{1}{2}\rho\\
a_2(x)=\frac{1}{3}\Delta\rho
+\frac{1}{24}(|R|^2-4|{\rm Ric} |^2+3\rho ^2)\\
a_3(x)=\frac{1}{8}\Delta\Delta\rho +
\frac{1}{24}{\rm div}{\rm div} (R, {\rm Ric})-
\frac{1}{6}{\rm div}{\rm div} (\rho{\rm Ric})+\\
\qquad\quad\ \ +\frac{1}{48}\Delta (|R|^2-4|{\rm Ric} |^2+8\rho ^2)+
\frac{1}{48}\rho(\rho ^2- 4|{\rm Ric} |^2+ |R|^2)+\\
\qquad\quad\ \ +\frac{1}{24}(\sigma_3 ({\rm Ric})- {\rm Ric} (R, R)-R({\rm Ric} ,{\rm Ric})),
\end{array}\right.
\end{equation}
where 
$\rho$, $R$, $\Ric$ denote respectively the scalar curvature,
the curvature tensor and the Ricci tensor of $(M, g)$,
and we are using  the following notations (in local coordinates $z_1, \dots , z_n$):
\begin{equation}\begin{array}{l}\label{values}
|D^{'}\rho|^2=  g^{j \bar i} \frac{\partial \rho}{\partial z_i} \frac{\partial \rho}{\partial \bar z_j},\\
|D^{'}\Ric|^2= g^{\alpha \bar i} g^{j \bar \beta} g^{\gamma \bar k}  \Ric_{i\bar j, k} \overline{\Ric_{\alpha \bar \beta, \gamma}} ,\\
|D^{'}R|^2 = g^{\alpha \bar i}g^{j \bar \beta}g^{\gamma \bar k}g^{l \bar \delta}g^{\epsilon \bar p} R_{i \bar j k \bar l, p} \overline{R_{\alpha \bar \beta \gamma \bar \delta, \epsilon}} ,\\
\div\div (\rho \Ric)=2|D^{'}\rho|^2+
g^{\beta \bar i} g^{j \bar \alpha} \Ric_{i\bar j}\frac{\partial^2 \rho}{\partial z_{\alpha} \partial \bar z_{\beta}}
+\rho\Delta\rho,\\
\div\div (R, \Ric)=
-g^{\beta \bar i} g^{j \bar \alpha} \Ric_{i\bar j}\frac{\partial^2 \rho}{\partial z_{\alpha} \partial \bar z_{\beta}}
-2|D^{'}\Ric|^2+\\
\qquad\qquad\qquad\quad\ \ + g^{\alpha \bar i} g^{j \bar \beta} g^{\gamma \bar k} g^{l \bar \delta} R_{i\bar j,k\bar l}R_{\beta \bar \alpha \delta \bar \gamma}-
R(\Ric, \Ric)-\sigma_3(\Ric),\\
R(\Ric, \Ric)= g^{\alpha \bar i}g^{j \bar \beta}g^{\gamma \bar k}g^{l \bar \delta} R_{i\bar jk\bar l}\Ric_{\beta \bar \alpha}\Ric_{\delta \bar \gamma},\\
\Ric(R, R) = g^{\alpha \bar i}g^{j \bar \beta}g^{\gamma \bar k}g^{\delta \bar p}g^{q \bar \epsilon} \Ric_{i\bar j}R_{\beta \bar \gamma p \bar q}R_{k \bar \alpha \epsilon \bar \delta},\\
\sigma_3 (\Ric)=g^{\delta \bar i}g^{j \bar \alpha}g^{\beta \bar \gamma} \Ric_{i\bar j}\Ric_{\alpha \bar \beta}\Ric_{\gamma \bar \delta},\\
\end{array}\end{equation}
where the $g^{j \bar i}$'s denote the entries of the inverse matrix of the metric (i.e. $g_{k \bar i} g^{j \bar i} = \delta_{kj}$),  \lq\lq\  ,p'' represents the covariant derivative
in the direction $\frac{\partial}{\partial z_p}$ and we are using the summation convention for repeated indices.

 The reader is also referred to  
\cite{loianal} and \cite{loismooth}  for a  recursive formula  for the coefficients $a_j$'s  and  an alternative computation of  $a_j$ for $j\leq 3$ using Calabi's diastasis function (see also  \cite{xu1}  for a graph-theoretic
interpretation of this recursive formula).

Due to Donaldson's work (cfr. \cite{donaldson, do2, arezzoloi}) in the compact case and respectively to the theory of
quantization  in the noncompact case (see, e.g. \cite{Ber1, cgr3, cgr4}),
 it is  natural to study metrics with the coefficients  of  the  TYZ expansion being prescribed.
In this regard  Z. Lu and G. Tian   \cite{lutian}  (see also \cite{engliszhang} and \cite{LoiArezzo} for the symmetric and homogenous case
respectively) prove that the PDEs $a_j=f$ ($j\geq 2$ and $f$ a smooth function on $M$) are elliptic and that if the logterm
of the Bergman and  Szeg\"{o} kernel  of the unit disk bundle over $M$ vanishes then  $a_k=0$, for  $k>n$ ($n$ being the complex dimension of $M$).  
The study of these PDEs makes sense regardless of the existence of  a TYZ expansion
and so given any K\"ahler manifold $(M, g)$ it makes sense to call the   $a_j$'s  the {\em coefficients associated to metric $g$}.
In the noncompact case in \cite{taubnut}
one can find a characterization of the flat metric as a Taub-Nut metric with $a_3=0$ while Feng and Tu \cite{fengtu} solve a conjecture formulated in \cite{zedda} by showing that  the complex hyperbolic space is the only Cartan-Hartogs domain where the coefficient  $a_2$ is constant. In a recent paper \cite{LZhs} the first author together with M. Zedda prove that a locally hermitian symmetric space with vanishing $a_1$
and $a_2$ is flat.

In this paper we address the following:

\begin{guess}\label{guess2}
A Ricci-flat metric on an $n$-dimensional complex manifold such that $a_{n+1}=0$ is flat.
\end{guess}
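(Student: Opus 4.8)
The plan is to reduce everything to the structure of the coefficient $a_{n+1}$ under the hypothesis $\Ric=0$, and then to exhibit $a_{n+1}$ as a total divergence plus a pointwise non-negative curvature term. First I would substitute $\rho=0$ and $\Ric\equiv 0$ (hence $\nabla^{k}\Ric\equiv 0$ for all $k$) into the invariant-theoretic description of the $a_j$'s. Each $a_j$ is a universal polynomial, of weight $2j$ (assigning weight $2$ to the curvature tensor and weight $1$ to each covariant differentiation), in the curvature and its covariant derivatives; in the Ricci-flat case every contraction involving a factor of $\Ric$ drops out, so $a_{n+1}$ reduces to a polynomial in the full curvature tensor $R$ and its covariant derivatives $\nabla^{k}R$ alone, of weight $2(n+1)$. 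For $n=1$ this recovers $a_2=\tfrac{1}{24}|R|^2$ from (\ref{coefflu}), which already forces $R=0$, so the statement is trivial in complex dimension one (where Ricci-flat is flat in any case).

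Next I would isolate the \emph{principal part} of $a_{n+1}$. Since a degree-$d$ monomial in the curvature carrying $k$ covariant derivatives has weight $2d+k$, the weight-$2(n+1)$ terms split into a degree-$(n+1)$ part which is a full metric contraction of $n+1$ copies of $R$ with no loose covariant derivatives, and strictly lower-degree parts, each of which (having $d\le n$) carries at least two covariant derivatives and can therefore be organized, integrating by parts, into genuine divergences $\div(\cdots)$ and Laplacians $\Delta(\cdots)$ of lower-weight invariants together with manifestly non-negative squares such as $|\nabla^{k}R|^2$. The heart of the argument would then be an identity of the shape
\begin{equation}\label{bochnertype}
a_{n+1}=\Delta F+\div\div G+Q,
\end{equation}
where $F$ and $G$ are built from lower-order curvature invariants and $Q=Q(R)$ is a pointwise non-negative function of the curvature which vanishes precisely when $R=0$. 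Granting (\ref{bochnertype}), the hypothesis $a_{n+1}\equiv 0$, combined with integration against the volume form (in the compact case, or using the decay of a complete metric with suitable asymptotics, as in the hypotheses of Theorem \ref{theomain2}), kills the divergence terms and forces $\int_M Q\,dV=0$, whence $Q\equiv 0$ and $R\equiv 0$ pointwise, i.e. the metric is flat.

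The main obstacle, and the reason the results here only settle $n\le 2$, is the construction and positivity of $Q$ for general $n$. Two sub-problems arise. First, there is no closed-form expression for $a_{n+1}$ beyond the three coefficients in (\ref{coefflu}); one must extract it from the recursive formula of \cite{loianal, loismooth}, and the number of independent $\U(n)$-invariant degree-$(n+1)$ contractions of the curvature grows rapidly with $n$, so even writing the principal part explicitly is combinatorially heavy. Second, and more seriously, it is not clear that the degree-$(n+1)$ curvature polynomial appearing as the non-derivative part of $a_{n+1}$ is pointwise non-negative for a Ricci-flat K\"ahler metric: unlike $|R|^2$ in the $n=1$ case, a homogeneous degree-$(n+1)$ invariant need not be a sum of squares, and establishing its definiteness — or correcting it by the available divergence terms so as to produce a non-negative $Q$ — is exactly what one would need. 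I expect that proving this positivity, presumably by exploiting the special algebraic curvature identities forced by $\Ric=0$ on a K\"ahler manifold, or by relating $a_{n+1}$ to a genuinely geometric non-negative quantity through the Bergman-kernel density $\epsilon_{mg}$, is where the real work lies; it is precisely this step that the radiality and ALE hypotheses of Theorem \ref{theomain2} are used to circumvent in the surface case.
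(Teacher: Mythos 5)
The statement you are attacking is labelled a conjecture in the paper and it remains one: the paper never proves it in general, but only verifies it for complex surfaces ($n=2$) under the extra hypotheses that the metric is complete and ALE, or radial (Theorem \ref{theomain2}). Your proposal is likewise not a proof, and you honestly flag the missing steps; the decisive problem, however, is that your structural Ansatz $a_{n+1}=\Delta F+\div\div G+Q$, with $Q$ pointwise non-negative and vanishing exactly when $R=0$, is already \emph{false} in the first nontrivial case $n=2$. Substituting $\rho=0$, $\Ric=0$ into Lu's formula (\ref{coefflu}) kills every zeroth-order cubic curvature term, because each of them ($\sigma_3(\Ric)$, $\Ric(R,R)$, $R(\Ric,\Ric)$, $\rho|R|^2$, $\rho^3$, \dots) carries a factor of $\Ric$ or $\rho$; there is no pure $R\otimes R\otimes R$ contraction in $a_3$. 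What survives is exactly $a_3=\frac{1}{48}\Delta|R|^2$: a pure Laplacian, with $Q\equiv 0$. Consequently the integration argument you envisage can only ever yield ``$|R|^2$ is harmonic'', never ``$R=0$'': on a compact Ricci-flat surface (say a K3) this gives $|R|^2$ constant, and that constant is not forced to vanish (indeed $\int_M|R|^2$ is a positive topological quantity there), while on an incomplete or local model --- which the conjecture, as stated, must also cover, since the radial metrics of Theorem \ref{theomain2} live on domains of $\C^n\setminus\{0\}$ and no completeness is assumed --- there is nothing to integrate against at all.

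This is precisely why the paper's partial verification uses input of a completely different nature, and it is worth contrasting it with your plan. In the ALE case, from $a_3=0$, i.e. $\Delta|R|^2=0$, the paper invokes Yau's Liouville theorem (a complete manifold with $\Ric\geq 0$ carries no nonconstant positive harmonic function) to get $|R|^2$ constant, and then uses the ALE decay of the curvature at infinity to force that constant to be zero; both completeness and decay are essential, and neither is a by-product of a pointwise identity. In the radial case the paper bypasses identities entirely: it classifies radial Ricci-flat potentials (Lemma \ref{radialcalabi}), computes explicitly $|R|^2=n(n-1)(n+1)(n+2)\epsilon^2(|z|^{2n}+\epsilon)^{-2(n+1)/n}$, and checks that $\Delta|R|^2$ vanishes only when $\epsilon=0$ or $n=1$, i.e. only in the flat case. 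If you want to salvage your approach for general $n$, the first question you must answer is whether the $\Ric$-free part of the degree-$(n+1)$ curvature polynomial in $a_{n+1}$ is nonzero and has a sign for Ricci-flat K\"ahler metrics; for $n=2$ it is identically zero, so any successful strategy must, like the paper's, inject global information (a Liouville theorem plus asymptotics) or a classification, rather than pointwise positivity.
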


In the following theorem, which represents our second result, we verify Conjecture \ref{guess2} for (compact or noncompact) complex surfaces under the assumption that the metric is either  ALE (Asymptotically Locally Euclidean) or radial. 

Roughly speaking, an $n$-dimensional complete Riemannian manifold $(M, g)$ is said to be ALE if there exists a compact subset $K \subset M$ such that $M \setminus K$ is diffeomorphic to the quotient of $\R^n \setminus B_R(0)$ (the ball of  radius $R>0$) by a finite group $G \subset O(n)$, and such that the metric $g$ on this open subset tends to the flat euclidean metric at infinity. 
For the exact definition and construction of ALE \K metrics, which are interesting both from the mathematical and the physical point of view, the reader is referred to the foundational paper \cite{kro} (see also \cite{BKM}, \cite{J}, \cite{NST}, \cite{M}): in this paper we will need just the fact that the norm of the curvature tensor of such metrics vanishes at infinity.

\begin{theor}\label{theomain2}
Let $(M, g)$ be a Ricci-flat \K  surface such that the third coefficient $a_3$ of the TYZ expansion vanishes. 
Assume that  one of the following two conditions holds true:
\begin{itemize}
\item [1.]
$g$ is complete and ALE (asymptotically locally Euclidean);
\item [2.]
$g$ is radial.
\end{itemize}
Then $g$ is flat.
\end{theor}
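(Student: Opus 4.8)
The plan is to turn the single scalar condition $a_3=0$, under Ricci-flatness, into one elliptic equation for $f:=|R|^2$, and then play off the two hypotheses against it. First I would set $\Ric=0$ and $\rho=0$ in \eqref{coefflu}. Every summand containing $\rho$, $\Ric$, or a covariant derivative of $\Ric$ then vanishes, leaving only $\frac{1}{24}\div\div(R,\Ric)$ and $\frac{1}{48}\Delta|R|^2$; moreover, reading off \eqref{values}, the expression for $\div\div(R,\Ric)$ reduces when $\Ric=0$ to its purely curvature term $g^{\alpha\bar i}g^{j\bar\beta}g^{\gamma\bar k}g^{l\bar\delta}R_{i\bar j k\bar l}R_{\beta\bar\alpha\delta\bar\gamma}$, which by the reality identity $\overline{R_{\alpha\bar\beta\gamma\bar\delta}}=R_{\beta\bar\alpha\delta\bar\gamma}$ is exactly $|R|^2$. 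Thus on a Ricci-flat surface
\[
a_3=\tfrac{1}{48}\Delta|R|^2+\tfrac{1}{24}|R|^2 ,
\]
so that $a_3=0$ is equivalent to the equation $\Delta|R|^2=-2|R|^2$ for the nonnegative function $f=|R|^2$.

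Under hypothesis 1 I would integrate this equation over the complete manifold $M$. Since $\Delta f$ is, up to a positive constant, the divergence $\div(\grad f)$, the divergence theorem yields $\int_M\Delta f\,dV_g=0$ provided the flux $\int_{\partial B_R}\partial_\nu f$ through large geodesic spheres tends to $0$; the ALE hypothesis, through the polynomial decay of the curvature of a Ricci-flat ALE metric, guarantees both that $f\in L^1(M)$ and that this flux vanishes. The reduced equation then forces $\int_M f\,dV_g=0$, and as $f=|R|^2\ge 0$ is continuous we conclude $R\equiv 0$, i.e.\ $g$ is flat.

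Under hypothesis 2 completeness is unavailable, so instead I would argue pointwise from the explicit radial structure. With $x=|z|^2$ and $\Phi=\Phi(x)$ one has $g_{i\bar j}=\Phi'\delta_{ij}+\Phi''\bar z_i z_j$, with tangential eigenvalue $\Phi'$ and radial eigenvalue $\Phi'+x\Phi''$, so for a surface $\det(g_{i\bar j})=\Phi'(\Phi'+x\Phi'')=\Phi'\,(x\Phi')'$. Ricci-flatness makes $\log\det(g_{i\bar j})$ pluriharmonic, hence constant since it is radial, so $\det(g_{i\bar j})\equiv c>0$; integrating the first-order ODE $\Phi'(x\Phi')'=c$ via the substitution $u=(\Phi')^2$, and rescaling to $c=1$, gives the one-parameter family $\Phi'=\sqrt{1+A/x^2}$, with $A=0$ the flat metric. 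I would then write $|R|^2$ explicitly as a function of $x$ for this family and substitute it, together with the radial Laplacian $\Delta f=\Phi'\,x\,f''+(\Phi'+1/\Phi')f'$, into $\Delta|R|^2+2|R|^2=0$. Since $a_3$ must vanish identically in $x$, and since the domain always reaches $x\to\infty$ (because $c>0$), the term $\Delta|R|^2$ decays strictly faster than $|R|^2$ there, so the leading behavior of the left-hand side is a nonzero multiple of $|R|^2$; this can vanish identically only if $|R|^2\equiv 0$, i.e.\ $A=0$ and $g$ is flat.

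The conceptual skeleton is short; the genuine labor, and the main obstacle, is the explicit computation of $|R|^2$ for the radial family and the verification that $\Delta|R|^2+2|R|^2=0$ admits only the trivial solution $A=0$ — a finite but tedious algebraic computation once the curvature tensor of the radial metric is written out. In the ALE case the only delicate point is justifying the vanishing of the boundary flux, which is precisely why one must invoke the actual decay rate of the curvature rather than the bare qualitative fact that $|R|\to 0$ at infinity.
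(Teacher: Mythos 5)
There is a genuine gap, and it sits at the very first step: your reduction of $a_3$ under Ricci-flatness rests on a misreading of the notation in (\ref{values}). The term $g^{\alpha\bar i}g^{j\bar\beta}g^{\gamma\bar k}g^{l\bar\delta}R_{i\bar j,k\bar l}R_{\beta\bar\alpha\delta\bar\gamma}$ appearing in $\div\div(R,\Ric)$ is \emph{not} $|R|^2$: the comma means that $R_{i\bar j,k\bar l}$ is the second covariant derivative of the Ricci tensor, $\Ric_{i\bar j,k\bar l}$, and not the curvature tensor $R_{i\bar jk\bar l}$. This is confirmed by the paper's own expansion (\ref{a3simanca}) in the proof of Theorem \ref{theomain3}, where the same term is written with $\Ric_{i\bar j,k\bar l}$, and it is also what one should expect: $\div\div(R,\Ric)$ is the double divergence of the contraction of $R$ with $\Ric$, so it must vanish identically whenever $\Ric\equiv 0$. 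Consequently, for a Ricci-flat metric every term of $a_3$ except $\frac{1}{48}\Delta|R|^2$ drops out, and the correct equation is $\Delta|R|^2=0$, not $\Delta|R|^2=-2|R|^2$.

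The error is not cosmetic, because your ALE argument relies structurally on the zeroth-order term: integrating the correct equation $\Delta|R|^2=0$ over $M$ gives $0=0$ and no information. What is needed instead is a Liouville-type statement; the paper invokes Yau's theorem that a complete manifold (with nonnegative Ricci curvature, automatic here) carries no nonconstant positive harmonic function, so $|R|^2$ is constant, and the ALE decay of the curvature then forces this constant to be zero. (A maximum-principle argument on large balls using $|R|\to 0$ at infinity would also do.) In the radial case your classification of the potentials agrees with Lemma \ref{radialcalabi}, and the case can still be closed by explicitly computing $\Delta|R|^2$ for that family --- the paper finds it proportional to $\epsilon^2\,(|z|^{2n}(n+3)-n\epsilon)(|z|^{2n}+\epsilon)^{-3(n+1)/n}$, which vanishes identically only for $\epsilon=0$ or $n=1$ --- but your proposed shortcut, extracting the leading behaviour of $\Delta|R|^2+2|R|^2$ at infinity and arguing that the zeroth-order term dominates, is again built on the wrong equation and does not survive its removal.
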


We end the paper  by showing that the assumption of Ricci-flatness in Conjecture \ref{guess1} and in Theorem \ref{theomain2} cannot be weakened to scalar-flatness. Indeed we prove the following:
\begin{theor}\label{theomain3}
The Simanca metric $g_S$ on the blown-up $\CP^2\sharp\overline{\CP^2}$ of $\CP^2$  at one point  is an ALE complete radial  projectively induced scalar flat (and not Ricci-flat) metric  with vanishing $a_3$.
\end{theor}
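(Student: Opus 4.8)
I would present the Simanca metric $g_S$ in its radial form and then check the six required properties, isolating the vanishing of $a_3$ as the main task. Recall that $g_S$ lives on the total space of $\mathcal{O}(-1)\to\CP^1$, the blow-up of $\C^2$ at the origin, which sits inside $\CP^2\sharp\overline{\CP^2}$ as the complement of the proper transform of a projective line. On the complement $\C^2\setminus\{0\}$ of the exceptional divisor it is $\U(2)$-invariant, with radial K\"ahler potential
$$\Phi(|z|^2)=|z|^2+\log|z|^2,\qquad |z|^2=|z_1|^2+|z_2|^2,$$
so that the radial claim is immediate. Completeness and the ALE property (the metric is asymptotic to the flat one on $\C^2$ because $\Phi$ differs from the flat potential $|z|^2$ only by the decaying term $\log|z|^2$, and the norm of its curvature tensor vanishes at infinity), together with smoothness across the exceptional $\CP^1$, are classical; I would cite \cite{kro} and the references therein.

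Scalar-flatness and the failure of Ricci-flatness I would obtain by the standard radial computation. With $r=|z|^2$ one has $\Phi'=1+1/r$ and $\Phi''=-1/r^2$, hence $\Phi'+r\Phi''\equiv1$ and $\det(g_{i\bar j})=\Phi'(\Phi'+r\Phi'')=(r+1)/r$, so $P:=\log\det(g_{i\bar j})=\log(r+1)-\log r$. For any radial function the complex Hessian has eigenvalue $P'$ on the angular direction and $P'+rP''$ on the radial one, whence
$$\rho=-g^{i\bar j}\partial_i\partial_{\bar j}P=-\Bigl(\frac{P'+rP''}{\Phi'+r\Phi''}+\frac{P'}{\Phi'}\Bigr)=0$$
after substituting the values above, while $\Ric_{i\bar j}=-(P'\delta_{ij}+P''\bar z_i z_j)$ is nonzero since $P'=\frac{1}{r+1}-\frac1r\neq0$. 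This gives scalar-flat and not Ricci-flat at once.

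For projective inducedness I would use Calabi's criterion in radial form: such a metric admits a K\"ahler immersion into $\CP^\infty$ as soon as $e^{\Phi(|z|^2)}$ has nonnegative coefficients as a power series in $|z|^2$, the immersion being then given by the corresponding weighted monomials (exactly as for the flat potential $|z|^2$ recalled in the Introduction). Here
$$e^{\Phi(|z|^2)}=|z|^2\,e^{|z|^2}=\sum_{m\geq1}\frac{|z|^{2m}}{(m-1)!},$$
whose coefficients are all positive, so $z\mapsto\bigl(\sqrt{\tfrac{1}{(m-1)!}\tbinom{m}{\alpha}}\,z^\alpha\bigr)_{m\geq1,\ |\alpha|=m}$ is a K\"ahler immersion into $\CP^\infty$. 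Its lowest-order components are the linear forms $z_1,z_2$, so the map extends holomorphically over the exceptional divisor, where it restricts to the standard $\CP^1\hookrightarrow\CP^\infty$; thus $g_S$ is projectively induced on the whole blow-up.

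The genuine obstacle is the vanishing of $a_3$. Putting $\rho=0$ in Lu's formula \eqref{coefflu} annihilates $\frac18\Delta\Delta\rho$, $-\frac16\div\div(\rho\Ric)$ and $\frac{1}{48}\rho(\rho^2-4|\Ric|^2+|R|^2)$; expanding $\div\div(R,\Ric)$ through \eqref{values} then cancels the two occurrences of $\sigma_3(\Ric)$ and leaves
$$a_3=\frac{1}{48}\Delta\bigl(|R|^2-4|\Ric|^2\bigr)+\frac{1}{24}\bigl(T-2|D'\Ric|^2-2R(\Ric,\Ric)-\Ric(R,R)\bigr),$$
where $T=g^{\alpha\bar i}g^{j\bar\beta}g^{\gamma\bar k}g^{l\bar\delta}R_{i\bar jk\bar l}R_{\beta\bar\alpha\delta\bar\gamma}$. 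My plan is then to reduce each of the surviving curvature invariants to an explicit function of $r$ by the radial machinery also used for Theorem \ref{theomain2} --- expressing $R$, $\Ric$ and their covariant derivatives through $\Phi',\Phi'',\Phi'''$ and $g^{i\bar j}$ --- and to substitute $\Phi=r+\log r$, verifying that the total vanishes identically. I expect the difficulty to be purely the bookkeeping of these several contractions rather than anything conceptual; the identity $\Phi'+r\Phi''\equiv1$ found above trivializes the radial eigenvalue and should make the covariant derivatives and the Laplacian $\Delta(|R|^2-4|\Ric|^2)$ tractable.
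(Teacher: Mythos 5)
Your treatment of radiality, scalar-flatness versus non-Ricci-flatness, and projective inducedness is sound and in fact coincides with the paper's: the immersion you write down, $z\mapsto\bigl(\sqrt{\tfrac{j+k}{j!k!}}\,z_1^jz_2^k\bigr)_{j+k\geq 1}$, is exactly the map $\varphi$ used in the paper, which then invokes simple-connectedness of $\CP^2\sharp\overline{\CP^2}$ and Calabi's Lemma \ref{calabi} to extend across $E$ (a cleaner way to get the immersion property on all of $M$ than your direct extension argument, which only checks holomorphy at the exceptional divisor). Your reduction of Lu's formula under $\rho\equiv 0$ is also algebraically correct, modulo one transcription error: the term you call $T$ must be $g^{\alpha\bar i}g^{j\bar\beta}g^{\gamma\bar k}g^{l\bar\delta}\Ric_{i\bar j,k\bar l}R_{\beta\bar\alpha\delta\bar\gamma}$, i.e.\ the \emph{second covariant derivative of the Ricci tensor} contracted against $R$; as you wrote it (without the comma) it is a curvature--curvature contraction equal to $|R|^2$, and carrying that into the computation would give the wrong answer.

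The genuine gap is that the decisive step is never performed. The entire content of the $a_3=0$ claim is a specific cancellation for the potential $\Phi_S=r+\log r$, and your proposal stops at ``I expect the difficulty to be purely bookkeeping.'' Nothing guarantees a priori that the five surviving invariants sum to zero --- this is a special feature of the Simanca metric, not a formal consequence of $\rho=0$ --- so without the explicit evaluation the theorem is not proved. The paper does carry this out: it first uses $a_2=0$ (from \cite[Example 1]{LZhs}), hence $|R|^2=4|\Ric|^2$, to discard the term $\tfrac1{48}\Delta(|R|^2-4|\Ric|^2)$ that you would otherwise have to compute, and then evaluates at $(z_1,0)$ the nonzero components $R_{1\bar11\bar1}=0$, $R_{1\bar12\bar2}=\tfrac{1}{|z_1|^2(|z_1|^2+1)}$, $R_{2\bar22\bar2}=-\tfrac{2}{|z_1|^4}$, the Ricci tensor, and its first and second covariant derivatives, substituting into the reduced formula to obtain $a_3=0$. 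To complete your argument you must either reproduce this explicit computation (using $\Phi_S'+r\Phi_S''\equiv1$ as you suggest) or at least import the identity $|R|^2=4|\Ric|^2$ and then do the remaining contractions; as it stands the proof of the key assertion is missing.
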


The paper is organized as follows. In Section 2 we recall the definition and properties of Calabi's diastasis function, which is the main tool for the proof of our results, and we apply Calabi's theory to radial metrics defined on open domains of $\C^n\setminus \{0\}$ obtaining Lemma
\ref{projind}, a fundamental tool in this paper. Finally, Section 3 and 4 are dedicated to the proofs of Theorem \ref{theomain} and Theorems \ref{theomain2} and \ref{theomain3} respectively.

\section{Radial projectively induced metrics}
In order to prove our theorems we need to recall the definition of Calabi's diastasis function and some of its properties.
Let $(M,g)$ be a K\"ahler manifold with a local \K potential $\Phi$, i.e. such that $\omega = \frac{i}{2} \partial \bar \partial \Phi$, where $\omega$ is the K\"ahler form associated to $g$. A K\"ahler potential is not unique, but it is defined up to an addition of the real part of a holomorphic function. If $g$ (and hence $\Phi$) is assumed to be real analytic, by duplicating the variables $z$ and $\bar z$, $\Phi$ can be complex analytically continued to a function $\hat\Phi$ defined in a neighbourhood $U$ of the diagonal containing $(p,\bar p)\in M\times \bar M$ (here $\bar M$ denotes the manifold conjugated to $M$). 

Then the \emph{diastasis function} $\dd_p(z)$ for $g$ is defined to be the unique K\"ahler potential around $p$ given by
$$D_p(z)=\hat\Phi(z,\bar z)+ \hat \Phi(p,\bar p) -\hat \Phi(z,\bar p)-\hat \Phi(p,\bar z).$$
By shrinking $U$ if necessary we can assume that $D_p$ is defined on $U$.

As shown by the statement of the following lemma, the diastasis turns out to be an important tool to study projectively induced metrics. 

\begin{lem} [Calabi \cite{Cal}]\label{calabi}
Let $(M, g)$ be a \K manifold. 
There exists a neighborhood of a point $p\in M$ that admits a \K immersion into $(\CP^N,g_{FS})$, with $N\leq\infty$, if and only if the metric $g$ is $1$-resolvable at $p$ of rank at most $N$. If $M$ is connected the $1$-resolvability does not depend on the point chosen. Moreover,
if $M$ is simply-connected and $g$ is $1$-resolvable at a point  then there exists a global \K immersion from $(M, g)$ into 
$(\CP^{N}, g_{FS})$.
\end{lem}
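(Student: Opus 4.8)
The plan is to derive all three assertions from the single structural fact underlying Calabi's theory: the \emph{hereditary property} of the diastasis. First I would prove that if $\iota\colon (M,g)\to (S,G)$ is any holomorphic isometric immersion of \K manifolds, then the diastasis functions satisfy $D^{g}_{p}=D^{G}_{\iota(p)}\circ\iota$. The argument is short: $\iota^{*}$ of a local potential for $G$ is a local potential for $g=\iota^{*}G$, so $D^{G}_{\iota(p)}\circ\iota$ and $D^{g}_{p}$ are two potentials for $g$ near $p$ and hence differ by $\mathrm{Re}(h)$ for a holomorphic $h$; but both are normalized so that their sesqui-analytic continuations kill all pure holomorphic and pure antiholomorphic terms (the diastasis is characterized by $\widehat{D}_{p}(z,\bar p)\equiv 0\equiv\widehat{D}_{p}(p,\bar z)$), and imposing this normalization forces $h$ to be constant, whence equality.

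Next, for the local equivalence I would specialize to $S=\CP^{N}$ with $G=g_{FS}$, whose diastasis centered at $[1:0:\cdots:0]$ is, in affine coordinates $w$, the function $\log(1+|w|^{2})$. If $f\colon U\to\CP^{N}$ is a \K immersion with $f(p)=[1:0:\cdots:0]$, written $f=[1:\phi_{1}:\cdots:\phi_{N}]$ with $\phi_{j}$ holomorphic and $\phi_{j}(p)=0$, the hereditary property yields
$$D_{p}(z)=\log\Bigl(1+\sum_{j=1}^{N}|\phi_{j}(z)|^{2}\Bigr),\qquad\text{equivalently}\qquad e^{\widehat{D}_{p}(z,\bar w)}-1=\sum_{j=1}^{N}\phi_{j}(z)\,\overline{\phi_{j}(w)}.$$
This exhibits the kernel $e^{\widehat{D}_{p}}-1$ as a positive-semidefinite sesqui-holomorphic kernel of rank at most $N$, which is exactly the assertion that $g$ is $1$-resolvable at $p$ of rank at most $N$. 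Conversely, given a resolution $e^{\widehat{D}_{p}}-1=\sum_{j}\phi_{j}(z)\overline{\phi_{j}(w)}$ I would set $f=[1:\phi_{1}:\cdots:\phi_{N}]$ and read the computation backwards: $f^{*}\log(1+|w|^{2})=D_{p}$ is a potential for $g$, so $f^{*}g_{FS}=\frac{i}{2}\partial\bar\partial D_{p}=g$, and nondegeneracy of $g$ forces $f$ to be an immersion. This establishes the local equivalence.

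For point-independence and the global statement I would argue by analytic continuation. Real-analyticity of $g$ lets one continue the local immersion $f$ built near $p$ along any path in $M$. The crux — and the step I expect to be the main obstacle — is \emph{Calabi rigidity}: two \K immersions of a connected domain into $\CP^{N}$ inducing the same metric differ by an element of $\mathrm{Aut}(\CP^{N},g_{FS})=\mathrm{PU}(N+1)$. Granting this, the continuation of $f$ along a loop returns an immersion inducing the same $g$, hence differing from $f$ by an element of $\mathrm{PU}(N+1)$; in particular one obtains a genuine local immersion near any $q$, and since $\CP^{N}$ is homogeneous one may normalize $f(q)=[1:0:\cdots:0]$ and read off a rank-$\le N$ resolution of $e^{\widehat{D}_{q}}-1$. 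Thus $1$-resolvability, and its rank, does not depend on the chosen point of the connected manifold $M$.

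Finally, if $M$ is simply connected the $\mathrm{PU}(N+1)$-valued monodromy of the continuation around any (contractible) loop is the identity, so $f$ continues to a single-valued global \K immersion $(M,g)\to(\CP^{N},g_{FS})$. In this scheme the only genuinely substantial ingredient is Calabi rigidity, which I would prove from the uniqueness of the positive diagonal expansion of the kernel $e^{\widehat{D}_{p}}-1$ together with the transitivity of $\mathrm{PU}(N+1)$ on the relevant unitary frames; the hereditary property and the remaining assertions then follow formally.
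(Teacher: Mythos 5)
First, a point of order: the paper itself offers no proof of this statement --- Lemma \ref{calabi} is quoted from Calabi's paper \cite{Cal} and used as a black box --- so your proposal can only be measured against Calabi's original argument, which it indeed tries to reconstruct. Your first two steps are that argument: the hereditary property of the diastasis, the identification of the Fubini--Study diastasis centred at a point with $\log(1+|w|^2)$ in affine coordinates, and the translation of a \K immersion $f=[1:\phi_1:\cdots:\phi_N]$ into the factorization $e^{\hat D_p(z,\bar w)}-1=\sum_j\phi_j(z)\overline{\phi_j(w)}$ are exactly Calabi's local theory, and your converse (nondegeneracy of $g$ forces $f$ to be an immersion) is fine. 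One thing you elide: in the paper's definition, $1$-resolvability is positive semidefiniteness and rank $\leq N$ of the coefficient matrix $B_{i,j}$, and passing from that matrix condition to a factorization by \emph{convergent} holomorphic functions $\phi_j$ (so that $f$ is actually defined on a neighbourhood of $p$) requires a convergence lemma, which Calabi proves for the series extracted from a decomposition $B=C^{*}C$. That is a real, though minor, omission.

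The genuine gap is in your third step. You continue the local immersion ``along any path, by real-analyticity of $g$'', and invoke rigidity only to control the monodromy. But the \emph{existence} of an analytic continuation is not automatic: to push the continuation of $f$ past a point $\gamma(T)$ of a path you must already know that some local \K immersion --- equivalently, $1$-resolvability --- is available at $\gamma(T)$, which is precisely the point-independence you are trying to prove; rigidity gives uniqueness of continuations, never their existence, so as written the argument is circular. What closes the circle in Calabi's treatment is a change-of-base-point computation. Writing $K_p=e^{\hat D_p}-1$, the definition of the diastasis gives
\begin{equation*}
1+K_q(z,\bar w)=\frac{\bigl(1+K_p(z,\bar w)\bigr)\bigl(1+K_p(q,\bar q)\bigr)}{\bigl(1+K_p(z,\bar q)\bigr)\bigl(1+K_p(q,\bar w)\bigr)},
\end{equation*}
so if $1+K_p(z,\bar w)=\sum_{j=0}^{N}\phi_j(z)\overline{\phi_j(w)}$ with $\phi_0\equiv 1$, then setting $\tilde\phi_j(z)=\bigl(1+K_p(q,\bar q)\bigr)^{1/2}\,\phi_j(z)/\bigl(1+K_p(z,\bar q)\bigr)$ (the denominator is holomorphic and nonvanishing) gives $1+K_q(z,\bar w)=\sum_{j=0}^{N}\tilde\phi_j(z)\overline{\tilde\phi_j(w)}$; since $K_q(\cdot,\bar q)\equiv 0$, the vector $\bigl(\tilde\phi_j(q)\bigr)_j$ is a unit vector, and composing with the constant unitary matrix of $\C^{N+1}$ that moves it to $(1,0,\dots,0)$ produces a factorization whose first function is identically $1$, i.e.\ exhibits $K_q$ as a positive kernel of rank $\leq N$ again. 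This propagates $1$-resolvability, with the same rank bound, to every $q$ in the domain of $D_p$ --- a neighbourhood whose size is determined by the metric alone, not by any immersion --- which is what makes the set of resolvable points open \emph{and} closed in the connected manifold $M$, and what guarantees that your continuation exists along every path. Once that lemma is in place, your monodromy argument for simply connected $M$ (and your kernel-based proof of rigidity itself) is correct.
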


\vskip 0.3cm
A \K metric with diastasis $D_p(z)$ is \emph{$1$-resolvable at $p$ of rank $N$} if the matrix $B_{i,j}$, defined by considering the expansion around the point $p$ of the function $e^{D_p(z)}-1=\sum_{m_i,m_j\in \N^n}B_{i,j}(z-p)^{m_i} (\zbar-\bar p)^{m_j}$, is positive semidefinite and its rank is $N$. 
Here, $z^{m_j}$ denotes the monomial in $n$ variables $\prod_{\alpha=1}^n z_\alpha^{m_{\alpha, j}}$ and we arrange every $n$-tuple of nonnegative integers as a sequence $m_j=(m_{1,j},\dots,m_{n,j})$ such that $m_0=(0,\dots,0)$, $|m_j|\leq |m_{j+1}|$ for all positive integer $j$ and all the $m_j$'s with the same $|m_j|$ using lexicographic order.

In particular, we are going to study metrics which admit a \K potential $\Phi:U\rightarrow \R$ that depends only on the sum of the local coordinates'  moduli defined on a domain that does not contain the origin. 
Namely, there exists 
$f:\tilde U\rightarrow \R$, $\tilde U\subset\R^+$,
such that 
\begin{equation}\label{deff}
f (x)=\Phi (z),\  z=(z_1, \dots , z_n),
\end{equation} 
where 
$$\tilde U=\{x=|z|^2=|z_1|^2+\cdots +|z_n|^2 \ | \ z\in U\}.$$
Unlike the case in which the origin is contained in the domain of definition of the diastasis, the matrix $B_{i,j}$ is not diagonal, so it is more difficult to apply Lemma \ref{calabi}
(see, e.g. \cite{diastcigar} for the case on which the origin is contained). The following lemma is the key ingredient for the proof of our results.
\begin{lem}\label{projind}
Let $n\geq 2$ and  $p=(s,0,\mathellipsis ,0)$, with $s\in\R$, $s\neq 0$, be a point of the complex domain $U\subset\C^n\setminus\{0\}$ on which is defined a radial metric $g$  with radial  \K potential $\Phi:U\rightarrow \R$ and corresponding diastasis  $D_p:U\rightarrow \R$.
Let $f:\tilde U \rightarrow \R$ defined by
(\ref{deff}) and, for  $h\in\N$,  let $g_h: \tilde U\rightarrow \R$ given by:
\begin{equation}\label{gh}
g_h(x)=\frac{d^{h}e^{f(x)}}{dx^{h}}e^{-f(x)}.
\end{equation}
Assume that  the entries of the following infinite matrix 				
\begin{equation}\label{matrix}
\Big( det\Big(\frac{1}{i!j!}\frac{\partial^{i+j}(e^{D_p(z)}g_h(|z|^2))}{\partial z_1^{i}\partial \zbar_1^{j}}\Big)_{0\leq i,j\leq l}\Big)_{l,h\in\N}
\end{equation}
are positive when evaluated in $p$. Then  the metric $g$ is $1$-resolvable at $p$ of infinite rank.
\end{lem}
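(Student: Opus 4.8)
The plan is to reduce the abstract notion of $1$-resolvability from Lemma \ref{calabi} to the concrete positivity condition on the matrix \eqref{matrix}, by exploiting the radial structure. The key observation is that, because $g$ is radial, the diastasis $D_p$ and all the relevant expansion data are built out of the single function $f$ and its derivatives, encoded in the functions $g_h$ of \eqref{gh}. First I would recall that the metric is $1$-resolvable at $p$ of rank $N$ precisely when the coefficient matrix $B_{i,j}$ in the expansion $e^{D_p(z)}-1=\sum B_{i,j}(z-p)^{m_i}(\bar z-\bar p)^{m_j}$ is positive semidefinite of rank $N$. So the real content is to show that this infinite matrix $B$ is positive definite (hence of infinite rank), and the strategy is to prove this via its principal minors using Sylvester's criterion: a Hermitian matrix is positive definite if and only if all its leading principal minors are positive.

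The central step is to organize the monomials so that positivity of the leading minors of $B$ is captured exactly by the determinants appearing in \eqref{matrix}. Here I would use the ordering of multi-indices $m_j$ fixed in the paragraph after Lemma \ref{calabi} (sorted by total degree $|m_j|$, then lexicographically). The radial symmetry forces a block structure: I expect that the dependence on the variables $z_2,\dots,z_n$ can be separated out, and that evaluating at $p=(s,0,\dots,0)$ collapses the problem to derivatives in the single pair $(z_1,\bar z_1)$. Concretely, I would differentiate $e^{D_p(z)}$ and express $D_p(z)$ in terms of $f(|z|^2)$ using the definition $D_p(z)=\hat\Phi(z,\bar z)+\hat\Phi(p,\bar p)-\hat\Phi(z,\bar p)-\hat\Phi(p,\bar z)$; the functions $g_h(x)=\frac{d^h e^{f}}{dx^h}e^{-f}$ arise naturally because each derivative $\partial^h/\partial x^h$ of $e^{f}$ pulls out a factor $g_h\,e^{f}$, and these are the only building blocks that survive. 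The index $h$ in \eqref{matrix} should therefore track the order of differentiation coming from the suppressed variables $z_2,\dots,z_n$ (equivalently, the degree contribution orthogonal to the $z_1$-direction), while the index $l$ runs over the truncation size of the leading principal submatrix in the $z_1$-direction.

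Thus I would argue that, after rearranging the rows and columns according to the chosen monomial ordering and evaluating at $p$, the leading principal minors of $B$ factor as products of the determinants
$$
\det\!\Big(\tfrac{1}{i!\,j!}\tfrac{\partial^{i+j}\bigl(e^{D_p(z)}g_h(|z|^2)\bigr)}{\partial z_1^{i}\,\partial\bar z_1^{j}}\Big)_{0\le i,j\le l}\Big|_{z=p},
$$
so that positivity of every such determinant for all $l,h\in\N$ is equivalent to positivity of every leading principal minor of $B$. By Sylvester's criterion this yields that $B$ is positive definite, and since it is an infinite matrix, its rank is infinite. Combining this with Lemma \ref{calabi} gives that $g$ is $1$-resolvable at $p$ of infinite rank, which is the assertion.

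The main obstacle I anticipate is verifying the block/factorization structure precisely, i.e. checking that under the prescribed ordering of multi-indices the leading principal submatrices of $B$ genuinely decompose into the $(l+1)\times(l+1)$ Hankel-type blocks indexed by $h$, with no cross terms spoiling the determinant factorization. This requires care because, as the authors note, unlike the case where the origin lies in the domain the matrix $B_{i,j}$ is \emph{not} diagonal, so the reduction to one-variable derivatives and the disappearance of mixed contributions at $p=(s,0,\dots,0)$ is exactly the delicate point. A secondary technical issue is justifying that evaluating at $p$ and the real-analytic continuation underlying $D_p$ commute with the formal manipulations of the generating function $e^{D_p(z)}g_h$, but this should follow from the real-analyticity assumption already in force for the diastasis.
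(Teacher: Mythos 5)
Your proposal follows the same route as the paper's proof: reorder the multi-indices, show that at $p=(s,0,\dots,0)$ the matrix $B_{i,j}$ becomes block diagonal with blocks equal, up to positive constants, to the matrices $\bigl(\tfrac{1}{i!j!}\partial_{z_1}^{i}\partial_{\bar z_1}^{j}(e^{D_p}g_h)\bigr)_{0\le i,j\le l}$ evaluated at $p$, and conclude by applying Sylvester's criterion block by block. You have also correctly read off the meaning of the two indices in (\ref{matrix}) ($h$ records the degree in the suppressed variables $z^*=(z_2,\dots,z_n)$, $l$ the truncation order in the $z_1$-direction), and you are right that one should check every finite principal submatrix (the paper does this degree by degree) rather than literally the leading minors of an infinite matrix.

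The genuine gap is that the single fact on which the whole block structure rests is left as an announced ``obstacle'' rather than proved: namely, that $\partial_z^{m_i}\partial_{\bar z}^{m_j}(e^{D_p}-1)\big|_p=0$ whenever $m_i^*\neq m_j^*$, where $m^*$ denotes the part of a multi-index in the variables $z_2,\dots,z_n$. Without this, neither the vanishing of the off-diagonal blocks nor the factorization of the minors is established, and the appearance of the functions $g_h$ in the diagonal blocks remains heuristic. The verification is short and you should supply it. By the definition of the diastasis and the choice $p=(s,0,\dots,0)$, one has $D_p(z)=f(|z|^2)+f(s^2)+(\text{real part of a holomorphic function of }z_1\text{ alone})$, so $e^{D_p}=e^{f(x)}\cdot u(z_1,\bar z_1)$ with $x=|z|^2$; since $\partial/\partial\bar z_k$ acts on functions of $x$ as $z_k\,d/dx$, applying $\partial_{\bar z^*}^{m_j^*}$ produces the monomial ${z^*}^{m_j^*}$ times $e^{D_p-f}\,d^{|m_j^*|}e^{f}/dx^{|m_j^*|}=e^{D_p}g_{|m_j^*|}$, which is exactly where (\ref{gh}) enters. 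Subsequently applying $\partial_{z^*}^{m_i^*}$ and evaluating at $z^*=0$, every term in which a $z^*$-derivative falls on a function of $x$ acquires a factor $\bar z_k$ with $k\geq 2$ and vanishes at $p$; hence a nonzero contribution survives only if the $z^*$-derivatives exactly annihilate the monomial ${z^*}^{m_j^*}$, i.e.\ only if $m_i^*=m_j^*$, in which case one is left with $m_j^*!$ times a pure $(z_1,\bar z_1)$-derivative of $e^{D_p}g_{|m_j^*|}$. This one computation simultaneously yields the vanishing of all cross terms and the identification of the blocks, after which your Sylvester argument goes through as stated.
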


\begin{proof}
Let $z=(z_1, z_2, \mathellipsis ,z_n)=(z_1,z^*)$, let $m_i=(m_{1,i},m_i^*)\in\N^n$ and let $D_p(z)$ be the diastasis function. We observe that if $m_i^*\neq m_j^*\in\N^{n-1}$ then
\begin{equation}\label{diag0}
\frac{\partial^{|m_i|+|m_j|}}{\partial z^{m_i}\partial{\zbar}^{m_j}}(e^{D_p(z)}-1)\big|_p=0.
\end{equation}
In fact, by definition of diastasis, $D_p(z)$ is the the sum of the \K potential $f(|z|^2)$, the constant $f(s^2)$ and the real part of a holomorphic function
which depends only on $z_1$ and which is equal to $-2f(s^2)$ if evaluated in $s$. Therefore 
\begin{equation}\label{Ah}
\frac{\partial^{|m_i|+|m_j|}(e^{D_p(z)}-1)}{\partial z_1^{m_{1,i}}\partial \zbar_1^{m_{1,j}}\partial {z^*}^{m_i^*}\partial{\zbar^*}^{m_j^*}} \Big|_p= \frac{\partial^{|m_i|+m_{1,j}}}{\partial z_1^{m_{1,i}}\partial \zbar_1^{m_{1,j}}\partial {z^*}^{m_i^*}}({z^*}^{m_j^*}e^{D_p(z)-f}\frac{d^{|m_j|}}{dx^{|m_j|}}e^f)\big|_p,
\end{equation}
where $x=|z|^2$. From which we can deduce obviously (\ref{diag0}) and also 
\begin{equation}\label{diag}
\frac{\partial^{|m_j^*|+|m_j^*|}}{\partial {z^*}^{m_j^*}\partial{\zbar^*}^{m_j^*}}(e^{D_p(z)}-1)\big|_p=m_j^*!g_{|m_j^*|}(s^2).
\end{equation}

Now, notice that in order to check if a metric is $1$-resolvable, we are free to change the above arrangement of the multiindices $m_i$'s, because this has just the effect to apply the same permutation to both rows and columns of the matrix $B_{i,j}=\frac{1}{m_i!m_j!}\frac{\partial^{|m_i|+|m_j|}(e^{D_p(z)}-1)}{\partial z^{m_{i}}\partial \zbar^{m_{j}}} \big|_p$ defined by the expansion around the point $p$ of the function $(e^{D_p(z)}-1)$, and then yields a similar matrix, which is positive definite if and only if the original one is.  
In particular, let us change the ordering of the $m_i$'s as follows:
$m_0=(0,\dots,0)$, $|m_j|\leq |m_{j+1}| \text{ for all positive integer } j$,
$\text{if } |m_i|=|m_{j}| \text{ and } m_{1,i}>m_{1,j} \text{ then } i<j$.

With this order, the square submatrix $E_h$ of $B_{i,j}$ relative to multi-indices $m_i$ such that $|m_i|\leq h$ assumes the following form

\begin{equation}\label{blocchiAD}
\left(
\begin{array}{c|c}
A_h & 0 \\
\hline
0 & D_h
\end{array}
\right)
\end{equation}
where $A_h$ is the square matrix relative to multi-indices  $m_i$ such that $|m_i|<h$ or $|m_i|=h$ and $m_{1,i}\neq 0$, while $D_h$ is the matrix relative to multi-indices $m_i$ such that $|m_i|=h$ and $m_{1,i} = 0$.
Indeed, if $|m_i| < h$ and $|m_j|=h, m_{1,j} =0$, then $m_i^* \neq m_j^*$ because, if not, we would have $|m_i| \geq |m_j|$; similarly we clearly have $m_i^* \neq m_j^*$ provided $|m_i| = |m_j| = h$ and $m_{1,i} \neq 0, m_{1,j} = 0$. This, by (\ref{diag0}), explains the null blocks in (\ref{blocchiAD}).
Moreover, it follows again by (\ref{diag0}), combined with the fact that $m_i \neq m_j$, $|m_i| = |m_j| = h$ and $m_{1,i} = m_{1,j} = 0$ imply $m_i^* \neq m_j^*$, that $D_h$ is diagonal (and the entries on the diagonal are described by (\ref{diag})) 
Now, if every matrix $E_h$ is positive definite, namely if for every positive integer $h$ the matrix $A_h$ is positive definite and the entries of $D_h$ are positive, the metric examined is $1$-resolvable at $p$ of infinite rank.

Since we obtain the entries of $D_h$ by multiplying $g_{h}|_p$ for a positive constant, these are positive for every integer $h$ if and only if the entries of the first row ($l=0$) of the matrix (\ref{matrix}), given by $e^{D_p(z)} g_h, \ h= 0,1, \dots$, are positive.

Now we consider the matrix $A_h$ and  we change again the order of the $m_i$'s as follows: 
$|m_j^*| < |m_{j+1}^*| \text{ for all positive integer } j$, \text{if } $|m_i^*|=|m_{j}^*|$ \text{and}  $m_i^*$ \text{precedes}  $m_j^*$ \text{with respect to the lexicographical order or if} $m_i^*=m_j^*$ \text{and} $m_{1,i}<m_{1,j}$ \text{ then } $i<j$.
Then, after the corresponding rows and columns exchanges on $A_h$  and by using  (\ref{diag0}) we obtain a block matrix of the following form:
$$  
\left(
\begin{array}{ccccc}
M_0^h  & 0       & \cdots       &\cdots &0  \\
0      &\ddots   & \ddots       &       &\vdots  \\
\vdots &\ddots   &M_{|m_j^*|}^h &\ddots &\vdots   \\
\vdots &         &\ddots        &\ddots & 0 \\
0      &\cdots	 &\cdots		&0		& M_{h-1}^h\\
\end{array}
\right)
$$
where $M_k^h$ are square matrices whose main diagonal belongs to the main diagonal of the whole matrix and, for the same reason, are themselves block matrices of the same type. By (\ref{Ah}), each block of $M_k^h$ is equal to
$$\Big(\frac{1}{i!j!}\frac{\partial^{i+j}(e^{D_p(z)}g_k)}{\partial z_1^{i}\partial \zbar_1^{j}}\Big)_{0\leq i,j\leq h-k}$$
multiplied by a positive constant. Therefore, by using Sylvester's criterion, if the entries from the second row onwards of the matrix (\ref{matrix}) are positive, $A_h$ is positive definite for every  integer $h$.
\end{proof}

\begin{corol}\label{notprojind}
Under the same assumptions of  Lemma \ref{projind}, if there exists  $x\in\tilde U$ and $h\in\N$ such that  the function given by (\ref{gh}) is negative, namely $g_h(x)<0$, then 
the metric $g$ is not projectively induced.
\end{corol}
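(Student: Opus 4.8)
The statement is the contrapositive of a necessary condition for $1$-resolvability, so the plan is to assume that $g$ \emph{is} projectively induced and deduce that $g_h(x)\ge 0$ for every $x\in\tilde U$ and every $h\in\N$. By Calabi's Lemma \ref{calabi}, if $g$ is projectively induced then it is $1$-resolvable, and since the domain is connected the $1$-resolvability holds at \emph{every} point, not only at the distinguished point $p=(s,0,\dots,0)$ singled out in Lemma \ref{projind}. This point-independence is what will let me pass from the fixed $p$ of the lemma to an arbitrary radius $x\in\tilde U$.

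Fixing $x\in\tilde U$ and $h\in\N$ with $g_h(x)<0$, I first note that $h\ge 1$, since $g_0\equiv 1$ rules out negativity at $h=0$. Exploiting that $g$ is radial, so that the whole construction of the diastasis and of the resolvability matrix $B_{i,j}$ is $U(n)$-invariant, I would take as base point $q=(\sqrt{x},0,\dots,0)$, which plays exactly the role of $p$ with $s=\sqrt{x}$. The matrix $B_{i,j}$ attached to $D_q$, whose positive semidefiniteness is precisely the $1$-resolvability condition at $q$, must then be positive semidefinite; in particular each of its diagonal entries is forced to be nonnegative.

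At this point I would simply reuse the computation already carried out in the proof of Lemma \ref{projind}. Since $n\ge 2$ and $h\ge 1$, there is a multi-index $m_j^*\in\N^{n-1}$ with $|m_j^*|=h$ (for instance $m_j^*=(h,0,\dots,0)$); setting $m_j=(0,m_j^*)\in\N^n$, so that $m_{1,j}=0$, formula (\ref{diag}) gives for the corresponding diagonal entry of $B_{i,j}$
\[
B_{jj}=\frac{1}{(m_j^*!)^2}\,\frac{\partial^{2|m_j^*|}(e^{D_q(z)}-1)}{\partial {z^*}^{m_j^*}\partial{\zbar^*}^{m_j^*}}\Big|_q=\frac{g_h(x)}{m_j^*!},
\]
a strictly positive multiple of $g_h(x)$. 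Positive semidefiniteness forces $B_{jj}\ge 0$, hence $g_h(x)\ge 0$, contradicting $g_h(x)<0$; therefore $g$ cannot be projectively induced. The only step that will need genuine care is the reduction of the base point to the positive $z_1$-axis: it rests on the radial, $U(n)$-invariant nature of the domain and the metric, which is what guarantees both that $q\in U$ for each $x\in\tilde U$ and that the diastasis at $q$ has the structure analyzed in Lemma \ref{projind}. The algebraic heart of the argument—that a diagonal entry of the resolvability matrix is a positive multiple of $g_h$—is supplied verbatim by (\ref{diag}), so no further computation is required.
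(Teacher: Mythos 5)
Your proposal is correct and follows essentially the same route as the paper: the paper's one-line proof combines Lemma \ref{calabi}, the computation in the proof of Lemma \ref{projind} (in particular (\ref{diag}), which identifies the relevant diagonal entries of the resolvability matrix as positive multiples of $g_h$, equivalently the first row of (\ref{matrix})), and the freedom to place the base point at radius $\sqrt{x}$. You merely spell out the details the paper leaves implicit, including the correct observation that one must invoke the \emph{proof} of Lemma \ref{projind} rather than its statement, and the $U(n)$-invariance needed to move the base point.
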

\begin{proof}
If follows by combining Lemma \ref{calabi}, Lemma \ref{projind} and the  observation that the entries of the first row of the matrix (\ref{matrix}) are given by  $e^{D_p(z)}g_h(|z|^2)$, $h\in\N$.
\end{proof}

\section{Proof of Theorem \ref{theomain}}
In order to prove Theorem \ref{theomain} we need the following (well-known) classification of the potentials of radial Ricci-flat metrics
(cfr. \cite{calabi}).
\begin{lem}\label{radialcalabi}
Let $U$ be a complex domain  of  $\C^n$  equipped with a radial \K  Ricci-flat metric $g$.
Then there esist $\lambda\in\R^+$ and  $\epsilon=-1, 0, 1$ such that  the function 
$f:\tilde U\rightarrow \R$ defined by (\ref{deff})
has the following expression 
\begin{equation}\label{equationfint}
f(x)=\lambda \int (\epsilon x^{-n} + 1)^{\frac{1}{n}} dx.
\end{equation}
\end{lem}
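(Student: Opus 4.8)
The plan is to translate Ricci-flatness into an ODE for $f$ and then integrate it. First I would write down the metric explicitly. Since $\Phi(z)=f(|z|^2)$ with $x=|z|^2=\sum_k z_k\bar z_k$ as in (\ref{deff}), differentiating gives $\partial\Phi/\partial z_i=f'(x)\,\bar z_i$ and hence
$$g_{i\bar j}=\frac{\partial^2\Phi}{\partial z_i\partial\bar z_j}=f'(x)\,\delta_{ij}+f''(x)\,\bar z_i z_j.$$
This is a rank-one perturbation of $f'(x)\,\mathrm{Id}$: the matrix $(\bar z_i z_j)$ has the single nonzero eigenvalue $x$ (on the vector $z$), so the eigenvalues of $(g_{i\bar j})$ are $f'(x)$ with multiplicity $n-1$ and $f'(x)+x f''(x)$ with multiplicity one. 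Therefore
$$\det(g_{i\bar j})=\big(f'(x)\big)^{\,n-1}\big(f'(x)+x f''(x)\big),$$
a function of $x$ alone, which is positive since $g$ is a metric.

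Next I would use that the Ricci form is, up to a nonzero constant, $\partial\bar\partial\log\det(g_{i\bar j})$, so that $g$ is Ricci-flat if and only if $\log\det(g_{i\bar j})$ is pluriharmonic. The key observation is that $\det(g_{i\bar j})$ is \emph{radial}; applying to $F(x):=\log\det(g_{i\bar j})$ the same computation as above, its complex Hessian has off-diagonal entries $F''(x)\,\bar z_i z_j$, which forces $F''\equiv 0$ and then $F'\equiv 0$. Hence $\det(g_{i\bar j})$ equals a positive constant $c$, and Ricci-flatness of a radial metric is \emph{equivalent} to the Monge--Amp\`ere ODE
$$\big(f'(x)\big)^{\,n-1}\big(f'(x)+x f''(x)\big)=c,\qquad c>0.$$

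The final step is to integrate this ODE. Writing $p=f'$, its left-hand side is $p^n+x\,p^{n-1}p'$, and multiplying by $n x^{n-1}$ one recognizes a total derivative:
$$\frac{d}{dx}\big(x^n p^n\big)=n x^{n-1}\big(p^n+x\,p^{n-1}p'\big)=n c\,x^{n-1}.$$
Integrating gives $x^n p^n=c\,x^n+C_0$ for a real constant $C_0$, whence
$$f'(x)=\big(c+C_0\,x^{-n}\big)^{1/n}=c^{1/n}\big(1+(C_0/c)\,x^{-n}\big)^{1/n}.$$
Setting $\lambda:=c^{1/n}>0$ and integrating once more yields the claimed form (\ref{equationfint}), up to the additive constant coming from the ambiguity of the potential, which does not affect $g$. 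It remains only to normalize the integration constant: rescaling the coordinates $z\mapsto\mu z$ replaces $x$ by $|\mu|^2 x$ and $C_0/c$ by $|\mu|^{-2n}C_0/c$, so one may take $\epsilon:=C_0/c$ equal to $0$, $+1$ or $-1$ according to its sign, i.e.\ $\epsilon\in\{-1,0,1\}$.

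I expect the main obstacle to be the middle step: recognizing that for a radial metric the full Ricci-flat equation collapses to the statement that $\det(g_{i\bar j})$ is constant, which rests on the elementary but crucial fact that a radial pluriharmonic function must be constant. Once this is in place, the surviving ODE is dispatched by the total-derivative trick above, and the normalization of $\epsilon$ by coordinate rescaling is routine.
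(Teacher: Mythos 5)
Your proof is correct and follows essentially the same route as the paper's: compute $\det(g)=(f')^{n-1}(f'+xf'')$, observe that Ricci-flatness forces $\log\det(g)$ to be constant (via the off-diagonal entries of the complex Hessian of a radial function, which implicitly uses $n\geq 2$), and integrate the resulting first-order ODE for $(f')^{n}$. The only cosmetic differences are that you integrate by recognizing the total derivative $\frac{d}{dx}\big(x^{n}(f')^{n}\big)$ where the paper solves a linear ODE in $\xi=(f')^{n}$, and that you spell out the coordinate rescaling normalizing $\epsilon\in\{-1,0,1\}$, which the paper dismisses as ``a change of variables.''
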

\begin{proof}
The \K form $\omega$ associated to $g$ reads as:
$$\omega =\frac{i}{2}\sum_{\alpha,\bar \beta}g_{\alpha\bar \beta}dz_\alpha\wedge d\bar z_\beta= \frac{i}{2}\partial \bar \partial \Phi,$$
Sinces $g$  Ricci-flat its Ricci form vanishes, namely
\begin{equation}\label{Ric0}
\rho = -i \partial \bar \partial \log \det(g) = 0
\end{equation}
where
$$g=(g_{\alpha\bar\beta}) = \left( \begin{array}{cccc}
f' + f'' |z_1|^2 & f'' \bar z_1 z_2 & \dots & f'' \bar z_1 z_n \\
f'' \bar z_2 z_1 & f' + f'' |z_2|^2 & \dots & f'' \bar z_2 z_n \\
& & \dots  & \\
f'' \bar z_n z_1 &  f'' \bar z_n z_2 & \dots & f' + f'' |z_n|^2 \\
\end{array} \right).$$

Thus, one easily sees that
$$\det(g) = (f')^{n-1}(f'+ f'' x),\  x=|z|^2.$$

If we denote $\Psi(x) = \log \det(g)$, equation (\ref{Ric0}) is equivalent to the following equations
$$\frac{\partial^2 \Psi}{\partial z_\alpha \partial \bar z_\beta} = \Psi'' \bar z_\alpha z_\beta = 0 \ (\alpha \neq \beta), \ \ \frac{\partial^2 \Psi}{\partial z_\alpha \partial \bar z_\alpha} = \Psi' + \Psi'' |z_\alpha|^2 = 0,\ \alpha,\bar\beta=1, \dots, n.$$ 

This yields  $\Psi'=0$, i.e.
$$\log \det(g) = \log [(f')^{n-1}(f'+ f'' x)] = c,$$
for some constant $c$.

Setting $f'=y$ and $\tilde c = e^c > 0$, we get
$$y^{n-1}(y + xy') = y^n + x y' y^{n-1} = y^n + \frac{x}{n}(y^n)' = \tilde c$$
which rewrites as the following linear O.D.E. in $\xi = y^n$
$$\xi' = - \frac{n}{x} \xi + \tilde c \frac{n}{x}.$$
Therefore, one finds
$$y^n = \xi = C x^{-n} + \tilde c$$
 that is
$$f' = (C x^{-n} + \tilde c)^{\frac{1}{n}}$$
and then the general solution is

\begin{equation}\label{soluzGenint}
f(x) = \int (C x^{-n} + \tilde c)^{\frac{1}{n}} dt, \ C\in\R, \tilde c>0,
\end{equation}
which is equivalent to  (\ref{equationfint}) after a change of variables.
\end{proof}
\begin{remar}\rm
It is known that the metrics corresponding to the \K potentials (\ref{equationfint}) are non-complete and non-flat except in the case of the Euclidean metric ($\epsilon=0$).
\end{remar}
\begin{proof}[Proof of Theorem \ref{theomain}]
Let us denote by $\omega_\epsilon$ the \K form corresponding to the potential (\ref{equationfint}) with $\lambda =1$, namely 
\begin{equation}\label{omegaepsilon}
\omega_\epsilon =\frac{i}{2}\partial\bar\partial f_\epsilon,
\end{equation}
where
\begin{equation}\label{fepsilon}
f_\epsilon(x)=\int (\epsilon x^{-n} + 1)^{\frac{1}{n}} dx,\ \epsilon=-1, 0, 1.
\end{equation}
Notice that $\omega_{\epsilon}$ is flat either for $n=1$ or $\epsilon=0$.
We will show that for $n\geq 2$ we have the following:
\begin{itemize}
\item [(a)]
$\lambda\omega_{-1}$ is not projectively induced for any $\lambda\in\R^+$;
\item [(b)]
$\lambda \omega_1$ is not projectively induced for any $\lambda\in\R^+\setminus\Z$.
\end{itemize}
Then the proof of Theorem \ref{theomain} will follow by the very definition of stable-projectively induced metric.

A simple computation shows  that the function $g_3(x)$ (namely (\ref{gh}) for $h=3$) for the potential $f=\lambda f_{-1}$ is given by:
$$g_3(x)=\lambda\frac{(x^n-1)^{\frac{1-2n}{n}}}{x^3}\big(\lambda^2(x^n-1)^{\frac{2+2n}{n}}+3\lambda(x^n-1)^{\frac{1+n}{n}}-(x^n(n+1)-2)\big).$$
Hence, one has $\lim_{x \to 1^+}g_3(x)=-\infty$ and the proof of (a) follows by Corollary \ref{notprojind}.

In order to prove (b)   we first show by induction that  the function $g_h(x)$ for the potential $f=\lambda f_{1}$
is given by:
\begin{equation}\label{ghxnew}
g_h(x)=\frac{\lambda}{x^h}\Big(\Psi(x)\prod_{j=1}^{h-1}\big(\lambda\Psi(x)-j\big) +\varphi_h(x)x \Big),
\end{equation}
where $\Psi(x)=(x^{n}+1)^{1/n}$ and $\varphi_h\in C^\infty([0,+\infty))$. This statement is trivially true for $g_1$, because it is equal to $\frac{\lambda}{x}\Psi$. The functions $g_h$ can be defined recursively as 
$$g_{h+1}=g'_h+g_1g_h,$$ 
where $g_1=f'$.
Hence
$$g_{h+1}=
\frac{\lambda}{x^{h+1}}\big(\Psi\prod_{j=1}^{h}(\lambda\Psi-j) +\varphi_{h+1}x \big),$$
with 
$$\varphi_{h+1}=\frac{d}{dx}\big(\Psi\prod_{j=1}^{h-1}(\lambda\Psi-j)\big)+(1-h+\lambda\Psi)\varphi_h+\varphi_h'x\ \in C^\infty([0,+\infty))$$
and (\ref{ghxnew}) is proved.
Therefore, if $\lambda\in\R^+\setminus \Z$
$$\lim_{x \to 0^+}g_{[\lambda]+2}(x)= - \infty,$$
where $[\lambda]$ denotes the integral part of $\lambda$.
Thus,   Corollary \ref{notprojind} implies (b) and this concludes the proof of the theorem. 
\end{proof}
Notice that  we are able to  extend the proof of (b) also  for some fixed integer values of $\lambda$ with a case by case analysis. 
For example when $\lambda=1$ one obtains the following table which expresses  $g_h(x)$, for suitable values of $x$,  depending on the dimension $n$ of the domain, for $n=2, 3, 4, 5$:
\vskip 0.3cm

\begin{center}
\begin{tabular}{|c c c | r|}
\hline
$x$ & $h$ & $n$ & $g_h(x)$\\
\hline
$3/4$ & $7$ & $2$ & $-\frac{12294367331}{2373046875}$\\
$3/4$ & $5$ & $3 $& $\approx -2.81$\\
$3/4$ & $5$ & $4$ & $\approx -10.3$\\
$6/5$ & $4$ & $5$ & $\approx -0.14$\\
\hline
\end{tabular}
\end{center}

\vskip 0.3cm
Moreover, for any $n$, we have:
$$g_4(1)= 2^{\frac{1-3n}{n}}(8(2^{\frac{1}{n}})^3-24(2^{\frac{1}{n}})^2+30(2^{\frac{1}{n}})-15+8n2^{\frac{1}{n}}-9n)$$
which is seen to be negative for $n\geq 6$.
We believe (in accordance with Conjecture 1) that 
$\lambda \omega_1$ is not projectively induced  for all  integer values of $\lambda$ even if  we are not able to provide a general proof. 

Notice that for $n=2$ and $\epsilon =1$ one can explicitly express a \K potential for the \K  metric $\omega_1$ on $\C^2\setminus \{0\}$,
namely
\begin{equation}\label{EHpot}
f_1(x)=\sqrt{x^2+1}+\log x-\log (1+\sqrt{x^2+1}), \ x=|z_1|^2+|z_2|^2
\end{equation}
If  $M$ denotes the blow-up of $\mathds{C}^2$ at the origin and  $E$ denotes  the exceptional divisor
one can prove (see \cite{EH}) that there exists a complete Ricci-flat  and ALE \K  metric $g_{EH}$ on $M$
whose restriction to $\C^2\setminus \{0\}$ has \K potential given by (\ref{EHpot}).
This metric is known in the literature as the Eguchi--Hanson metric and denoted here by $g_{EH}$.

Therefore as a byproduct of our analysis one gets the following:
\begin{corol}
The Eguchi--Hanson metric $g_{EH}$ is not projectively induced.
\end{corol}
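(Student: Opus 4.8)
The plan is to reduce the statement to the local non-resolvability criterion of Section 2 and then to invoke the explicit sign computation already recorded in the table for $n=2$. First I would observe that being projectively induced is inherited by open submanifolds: if $(M, g_{EH})$ admitted a \K immersion into $(\CP^N, g_{FS})$, then its restriction to the open subset $\C^2\setminus\{0\}\subset M$ would yield a \K immersion of $(\C^2\setminus\{0\}, g_{EH}|_{\C^2\setminus\{0\}})$. Hence it suffices to prove that this restriction is not projectively induced. By construction (\ref{EHpot}) the restriction is radial with \K potential $f_1$, which is precisely the instance $n=2$, $\epsilon=1$, $\lambda=1$ of Lemma \ref{radialcalabi}; indeed one checks directly that $f_1'(x)=\sqrt{x^2+1}/x=\Psi(x)/x$, in agreement with $g_1=\frac{\lambda}{x}\Psi$ for $\lambda=1$.

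Second, I would apply Corollary \ref{notprojind}, whose hypothesis only requires exhibiting a single pair $(x,h)$, with $x\in\tilde U$ and $h\in\N$, for which the function $g_h$ of (\ref{gh}) associated to $f=f_1$ satisfies $g_h(x)<0$. The functions $g_h$ are generated from $g_1=f_1'$ by the recursion $g_{h+1}=g_h'+g_1 g_h$ already used in the proof of Theorem \ref{theomain}, or may be read off directly from the closed form (\ref{ghxnew}) with $\lambda=1$ and $\Psi(x)=(x^2+1)^{1/2}$.

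Third, and this is the crucial point, I would emphasize that the clean argument used for part (b) of Theorem \ref{theomain} does \emph{not} apply here: that argument produces $\lim_{x\to 0^+}g_{[\lambda]+2}(x)=-\infty$ only when $\lambda\notin\Z$, whereas the Eguchi--Hanson potential corresponds to the integer value $\lambda=1$, for which the product $\prod_{j=1}^{h-1}(\lambda\Psi-j)$ no longer forces a definite sign near the puncture. So one cannot appeal to the behaviour as $x\to 0^+$; instead I would evaluate $g_h$ at an interior point. Precisely, the table following the proof of Theorem \ref{theomain} records, for $n=2$ and $\lambda=1$, the value $g_7(3/4)=-\frac{12294367331}{2373046875}<0$, and this single negative value is exactly what Corollary \ref{notprojind} needs.

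Combining these observations, the restriction of $g_{EH}$ to $\C^2\setminus\{0\}$ fails to be $1$-resolvable, hence is not projectively induced by Lemma \ref{calabi}, and therefore $g_{EH}$ itself is not projectively induced on $M$. The only genuine obstacle is the explicit computation of $g_7$ for $n=2$, $\lambda=1$, which is purely mechanical—iterating the recursion seven times and evaluating at $x=3/4$—and is already carried out in the table; once the reduction to the punctured domain and the choice of the integer value $\lambda=1$ are in place, no conceptual difficulty remains.
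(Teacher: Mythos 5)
Your proposal is correct and follows essentially the same route as the paper: the corollary is stated there as a ``byproduct'' of the preceding analysis, namely the table entry $g_7(3/4)=-\frac{12294367331}{2373046875}<0$ for $n=2$, $\lambda=1$ combined with Corollary \ref{notprojind}, together with the identification of $g_{EH}$ restricted to $\C^2\setminus\{0\}$ with the radial potential $f_1$. Your explicit remarks that restriction to an open subset preserves projective inducedness and that the $\lambda\notin\Z$ argument of part (b) is unavailable here are accurate and merely make explicit what the paper leaves implicit.
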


\begin{remar}\rm
Notice that if one will be able to prove that $\lambda g_{EH}$ is not projectively induced for all $\lambda>0$
(in accordance with our conjecture), this will provide an example of Ricci-flat and complete \K metric which does not admit a \K immersion into any finite or infinite dimensional  complex space form (the reader is referred to \cite{diastcigar} for details related to this issue).
\end{remar}

\section{Proof of Theorem \ref{theomain2} and Theorem \ref{theomain3}}
\begin{proof}[Proof of Theorem \ref{theomain2}]
By  (\ref{coefflu}) the assumption $a_3=0$ implies $\Delta |R|^2=0$. By 
a celebrated result of Yau \cite{Yau} (being $M$ complete) $(M, g)$ does not admit a nonconstant positive harmonic function.
Hence $|R|^2$ is constant. Being $g$ an ALE metric $|R|^2=0$ and so the metric $g$  is forced to be flat. This proves 1. 

In order to prove 2. it is enough to show that the vanishing of the term $a_3$ for the \K metric $g$ associated to the \K form  $\omega_\epsilon$ given by (\ref{omegaepsilon}) implies 
$g$ is flat, i.e. either $n=1$ or $\epsilon=0$.
Since
\begin{equation}\label{curvatureloc}
R_{i\bar j k \bar l}=\frac{\partial^2 g_{i\bar l}}{\partial z_k\partial\bar z_j}-\sum_{pq}g^{p\bar q}\frac{\partial g_{i\bar p}}{\partial z_k}\frac{\partial g_{q\bar l}}{\partial\bar z_j}.
\end{equation}
 one easily sees that the non-vanishing  components of the curvature tensor of the metric $g$  at $(z_1,0,\mathellipsis ,0)$ are:
\vspace{0.2 cm}\\
\hspace{0.2cm}
$\begin{array}{l}
R_{1\bar 1 1 \bar 1}=2f_\epsilon''+4f_\epsilon'''|z_1|^2+f_\epsilon''''|z_1|^4-\frac{1}{(f_\epsilon'+f_\epsilon''|z_1|^2)}(2f_\epsilon''+f_\epsilon'''|z_1|^2)^2|z_1|^2,\\
R_{1\bar 1 i \bar i}=f_\epsilon''+f_\epsilon'''|z_1|^2-\frac{1}{f_\epsilon'}(f_\epsilon'')^2  |z_1|^2,\\
R_{i\bar i i \bar i}=2R_{i\bar i j \bar j}=2f_\epsilon'',
\end{array}$\vspace{0.2 cm}\\
where $i,j\neq 1$ and $i\neq j$. 

Therefore, after a straightforward  but long computation,  taking into account the  curvature tensors symmetries
and the invariance of  $|R|^2$ under unitary transformations, we get
$$|R|^2=n(n-1)(n+1)(n+2)\epsilon^2(|z|^{2n}+\epsilon)^{-2(n+1)/n}.$$
Since 
$$\Delta |R|^2=g^{1\bar 1} (\frac{d|R|^2}{dx}+\frac{d^2|R|^2}{dx^2}x)+(n-1)g^{i\bar i} \frac{d}{dx}|R|^2, \ x=|z|^2$$
this yields (by Ricci-flatness)
$$a_3=\Delta |R|^2=2n(n-1)(n+2)(n+1)^2\epsilon^2((|z|^{2n}+\epsilon)^{-3(n+1)/n}(|z|^{2n}(n+3)-n\epsilon), $$
which vanishes either for $\epsilon =0$  or $n=1$.
\end{proof}

In order to prove Theorem \ref{theomain3} we recall the definition of Simanca's metric.

Let $M=\CP^2\sharp\overline{\CP^2}$ be the blow-up of $\mathds{C}^2$ at the origin and denote by $E$ the exceptional divisor.
Let $(z_1, z_2)$ be the standard coordinates of $\mathds{C}^2$. In \cite{simanca}  Simanca constructs a scalar flat K\"ahler complete (not Ricci-flat) metric  $g$ on $M$, 
whose K\"ahler potential  on $M\setminus E=\mathds{C}^2\setminus\{0\}$ can be written as 
\begin{equation}\label{potenzSimanca}
\Phi_S(|z|^2)=|z|^2+\log |z|^2.
\end{equation}

\begin{proof}[Proof of Theorem \ref{theomain3}]
The holomorphic map $$\varphi:\C^2\setminus \{ 0 \}\rightarrow \CP^{\infty}$$
given by
$$
(z_1,z_2)\mapsto (z_1,z_2,\mathellipsis ,\sqrt{\frac{j+k}{j!k!}}z_1^{j}z_2^{k},\mathellipsis), \ j+ k\neq 0,
$$
is a \K immersion from  $(\C^2\setminus \{ 0 \}, g_S)$ into $(\CP^{\infty}, g_{FS})$,
where $g_S$ denotes the restriction of the Simanca metric $g_S$ to $\C^2\setminus \{ 0 \}$ .
Indeed
$$\varphi^*\omega_{FS}=\frac{i}{2}\partial\bar\partial\log\sum_{j, k\in\N,\  j+k\neq 0}{\frac{j+k}{j!k!}}|z_1|^{2j}|z_2|^{2k}=\frac{i}{2}\partial\bar\partial\log (|z|^2e^{|z|^2})=
\frac{i}{2}\partial\bar\partial\Phi_S=\omega_S$$
Since $M=\CP^2\sharp\overline{\CP^2}$ is simply-connected it follows by Lemma \ref{calabi} that $\varphi$ extends to a \K immersion from $(M, g_S)$
into $(\CP^{\infty}, g_{FS})$.
It remains to  show that $a_3=0$.

By (\ref{coefflu}) and (\ref{values}) and taking into account that $a_2=0$ and  hence $|R|^2=4|Ric|^2$ (see \cite[Example 1]{LZhs}) one gets:
\begin{equation}\label{a3simanca}
a_3=\frac{1}{24}\big(-2 g^{\alpha\bar i}g^{j\bar \beta}g^{\gamma\bar k}Ric_{i\bar j,k}\overline{Ric_{\alpha\bar \beta,\gamma}}
+g^{\alpha \bar i}g^{j \bar \beta}g^{\gamma \bar k}g^{l\bar \delta}Ric_{i\bar j, k \bar l}R_{\beta\bar \alpha \delta \bar \gamma}-$$
$$-g^{\alpha \bar i}g^{j \bar \beta}g^{\gamma \bar k}g^{\delta\bar p}g^{q\bar\epsilon}R_{\beta\bar \gamma p \bar q}R_{k\bar \alpha \epsilon \bar \delta}Ric_{i\bar j}
-2g^{\alpha \bar i}g^{j \bar \beta}g^{\gamma \bar k}g^{ l\bar \delta}R_{i\bar j k \bar l}Ric_{\beta\bar \alpha}Ric_{\delta\bar \gamma}
\big).
\end{equation}
Since $a_3$ is invariant under unitary transformations, we only need to compute $a_3$ in $(z_1,0)$.
By (\ref{potenzSimanca}) we have

$$g=
\begin{pmatrix}
1 + \frac{|z_2|^2}{(|z_1|^2+|z_2|^2)^2} &  - \frac{z_2 \bar z_1}{(|z_1|^2+|z_2|^2)^2} \\
-\frac{z_1 \bar z_2}{(|z_1|^2+|z_2|^2)^2} & 1 + \frac{|z_1|^2}{(|z_1|^2+|z_2|^2)^2} \\
\end{pmatrix}$$

so that, for $z_2 = 0$,

$$g=
\begin{pmatrix}
1  & 0 \\
0 & \frac{|z_1|^2 + 1}{|z_1|^2} \\
\end{pmatrix}, \ \ g^{-1} =
\begin{pmatrix}
1  & 0 \\
0 & \frac{|z_1|^2}{|z_1|^2 + 1} \\
\end{pmatrix} .$$

Combining this with (\ref{curvatureloc}) we deduce that  the unique components  different  from zero when evaluated at $(z_1,0)$ are:\vspace{0.2 cm}\\
\hspace{0.2cm}
$\begin{array}{l}
R_{1\bar 1 1 \bar 1}=2\Phi_S''+4\Phi_S'''|z_1|^2+\Phi_S''''|z_1|^4-\frac{1}{(\Phi_S'+\Phi_S''|z_1|^2)}(2\Phi_S''+\Phi_S'''|z_1|^2)^2|z_1|^2=0\\
R_{1\bar 1 2 \bar 2}=\Phi_S''+\Phi_S'''|z_1|^2-\frac{1}{\Phi_S'}(\Phi_S'')^2  |z_1|^2=\frac{1}{|z_1|^2(|z_1|^2+1)}\\
R_{2\bar 2 2 \bar 2}=2\Phi_S''=-\frac{2}{|z_1|^4}
\end{array}$\vspace{0.2 cm}\\
By recalling that $Ric_{i\bar j}=-\frac{\partial^2 \log\det g}{\partial z_i\partial\bar z_j}$ one gets:
$$Ric=
\begin{pmatrix}
-\frac{1}{(|z_1|^2+1)^2} &0\\
0& \frac{1}{(|z_1|^2+1)|z_1|^2}\\
\end{pmatrix}$$
By definition $Ric_{i\bar j,k}=\partial_k Ric_{i\bar j}- Ric_{p\bar j}\Gamma_{ki}^p$, where $\Gamma_{ki}^p$ are Christoffel's symbols, given  by $\Gamma_{ki}^p = g^{p \bar q} \frac{\partial g_{i \bar q}}{\partial z_k}$.

 A straightforward computation gives that the unique first covariant derivatives different from zero are
\vspace{0.2 cm}\\
\hspace{0.2cm}
$\begin{array}{l}
Ric_{1\bar 1,1}=\frac{2}{(|z_1|^2+1)^3}\bar z_1\\
Ric_{2\bar 2,1}=Ric_{1\bar 2,2}=-\frac{2}{|z_1|^2(|z_1|^2+1)^2}\bar z_1
\end{array}$\vspace{0.2 cm}\\
Finally, we compute only the following second covariant derivatives (by definition $Ric_{i\bar j,k\bar l}=\partial_{\bar l}\partial_k Ric_{i\bar j}+\Gamma_{ki}^q\Gamma_{\bar l\bar j}^{\bar p}Ric_{q\bar p}- \Gamma_{ki}^p\partial_{\bar l}Ric_{p\bar j} -\partial_{\bar l}\Gamma_{ki}^p Ric_{p\bar j} -\Gamma_{\bar l\bar j}^{\bar p}\partial_kRic_{i\bar p}  $).
\vspace{0.2 cm}\\
\hspace{0.2cm}
$\begin{array}{l}
Ric_{1\bar 1,2\bar 2}=Ric_{2\bar 1,1\bar 2}=\frac{4(4|z_1|^2-1)}{|z_1|^2(|z_1|^2+1)^6}-\frac{1}{|z_1|^2(|z_1|^2+1)^3}\\
Ric_{2\bar 2,1\bar 1}= Ric_{1\bar 2,2\bar 1}=\frac{4(4|z_1|^2-1)}{|z_1|^2(|z_1|^2+1)^6}+\frac{1}{|z_1|^2(|z_1|^2+1)^3} \\
Ric_{2\bar 2,2\bar 2}=-\frac{4}{|z_1|^2(|z_1|^2+1)^2}
\end{array}$\vspace{0.2 cm}\\
Substituting  in (\ref{a3simanca}), after a long but straightforward computation  one gets $a_3=0$, 
and we are done.
\end{proof}


\begin{thebibliography}{99}

\bibitem{arezzoloi}
C. Arezzo, A. Loi,
{\em Moment maps, scalar curvature and quantization of \K manifolds},
Comm. Math.  Phys. 243 (2004), 543-559.
\bibitem{AL2}  C. Arezzo, A. Loi, \emph{A note on K\"ahler-Einstein metrics and Boechner coordinates}, Abh. Math. Sem. Univ. Hamburg 74 (2004), 49-55.
\bibitem{ALZ} C. Arezzo, A. Loi, F. Zuddas, \emph{On homothetic balanced metrics}, Ann. Glob. Anal. Geom. (2012) 41, 473-491.
\bibitem{LoiArezzo} C. Arezzo, A. Loi, F. Zuddas, \emph{Szeg\"o kernel, regular quantizations and spherical CR-structures}, 
Math. Z.  (2013) 275, 1207-1216.
\bibitem{BKM} S. Bando, A. Kasue, H. Nakajima, \emph{On a construction of coordinates at infinity on manifolds with fast curvature decay and maximal volume growth}, Invent. math. 97 (1989), 313-349.
\bibitem{Ber1} F. A. Berezin, {\em
Quantization},
Izv. Akad. Nauk SSSR Ser. Mat. 38 (1974), 1116--1175 (Russian).
\bibitem{Cal} E. Calabi,  \emph{Isometric Imbedding of Complex Manifolds}, Ann. of Math. 58 (1953), 1--23.
\bibitem{calabi} E. Calabi,  \emph{A construction of nonhomogeneous Einstein metrics}, Proceedings of Symposia in Pure Mathematics, Vol. 27 (1975), 18--24.
\bibitem{cgr3} M. Cahen, S. Gutt, J. H. Rawnsley,
{\em Quantization of K\"{a}hler manifolds  III}, Lett. Math. Phys.
30 (1994), 291-305.
\bibitem{cgr4} M. Cahen, S. Gutt, J. H. Rawnsley,
{\em Quantization of K\"{a}hler manifolds IV}, Lett. Math. Phys. 34
(1995), 159-168.
\bibitem{Ch} S. S. Chern, \emph{On Einstein hypersurfaces in a K\"ahler manifold of constant scalar curvature}, J. Diff. Geom. 1 (1967), 21-31.
\bibitem{DHL} A.J. Di Scala, H. Hishi, A. Loi, \emph{K\"ahler immersions of homogeneous \K manifolds into complex space forms}, Asian J. Math. 16 (3) (2012) 479-488.
\bibitem{donaldson} S. Donaldson, {\em Scalar Curvature and Projective Embeddings, I}, J. Diff. Geom. 59 (2001),  479-522.
\bibitem{do2} S. Donaldson,
{\em Scalar Curvature and Projective Embeddings, II},
Q. J. Math. 56 (2005),  345--356.
\bibitem{EH} T. Eguchi, A. J. Hanson, 
{\em Self-dual solutions to Euclidean gravity}
Ann. Physics 120 (1979), no. 1, 82-106.
\bibitem{engliszhang} M. Engli\v{s}, G. Zhang, {\em Ramadanov conjecture and line bundles over compact Hermitian symmetric spaces},  Math. Z., vol. 264, no. 4 (2010), 901-912.
\bibitem{fengtu} Z. Feng, Z. Tu, {\em On canonical metrics on Cartan-Hartogs domains}, Math. Zeit. 278 (2014), Issue 1-2, 301--320.
\bibitem{J} D. Joyce, {\em Asymptotically locally euclidean metrics with holonomy $SU(m)$}, Annals of Global Analysis and Geometry, Vol. 19 No.1 (2001) 55-73.
\bibitem{Ha} J. Hano, \emph{Einstein complete intersections in complex projective space}, Math. Ann. 216 (1975), no. 3, 197-208.
\bibitem{hulinlambda} D. Hulin, \emph{K\"ahler--Einstein metrics and projective embeddings}, J. Geom. Anal. 10 (2000), no. 3, 525--528.
\bibitem{lebrun} C. LeBrun, \emph{Counter-examples to the generalized positive action conjecture}, Comm. Math. Phys. 118 (1988), 591-596.
\bibitem{kro} P. B. Kronheimer, 
{\em The construction of ALE spaces as hyper-\K\ quotients},  
J. Diff. Geom. 29 (1989), no. 3, 665-683. 
\bibitem{loianal} A. Loi,
{\em The Tian--Yau--Zelditch asymptotic expansion
for real analytic K\"{a}hler metrics},
Int. J.  of Geom. Methods
Mod. Phys. 1 (2004), 253-263.
\bibitem{loismooth} A. Loi,
{\em A Laplace integral, the T-Y-Z expansion and Berezin's transform
on a Kaehler manifold},
Int. J.  of Geom. Methods
Mod. Phys. 2 (2005), 359-371.
\bibitem{loimossaber} A. Loi, R. Mossa, \emph{Berezin quantization of homogeneous bounded domains}, Geom. Ded.  161 (2012), 1, 119-128.
\bibitem{loimossahom} A. Loi, R. Mossa, \emph{Some remarks on homogeneous K\"ahler manifolds}, 
Geom. Ded. 179 (2015), 377--383.s
\bibitem{LZ} A. Loi, M. Zedda, \emph{K\"ahler-Einstein submanifolds of the infinite dimensional projective space}, Math. Ann. 350 (2011), 145-154. 
\bibitem{LZhs} A. Loi, M. Zedda, \emph{On the coefficients of TYZ expansion of locally Hermitian symmetric spaces},  Manuscr. Math., 148 (2015), no. 3, 303-315.
\bibitem{diastcigar} A. Loi , M. Zedda,
\emph{The diastasis function of the Cigar metric}, 
J. Geom. Phys. 110,  269-276 (2016).
\bibitem{taubnut} A. Loi, M. Zedda, F. Zuddas, \emph{Some remarks on the K\"ahler geometry of the Taub-NUT metrics}, Ann. Global Anal. Geom. 41 (2012), no. 4, 515-533. 
\bibitem{lu} Z. Lu,
{\em On the lower terms of the asymptotic expansion of Tian--Yau--Zelditch},
Amer. J. Math. 122 (2000),  235-273.
\bibitem{lutian} Z. Lu and G. Tian,
{\em The log term of Szeg\"{o} Kernel}, Duke Math. J. Volume 125,
No 2 (2004), 351-387.
\bibitem{M} V. Minerbe, \emph{On the asymptotic geometry of gravitational instantons}, Ann. Scient. Ec. Norm. Sup. 4e serie, t. 43 (2010), 883-924.
\bibitem{NST} L. Ni, Y. Shi, L.-F. Tam, \emph{Ricci Flatness of Asymptotically Locally Euclidean Metrics  }, Trans. Amer. Math. Soc. Vol. 355, No. 5 (2003), 1933-1959. 
\bibitem{simanca} S.R. Simanca, \emph{K\"ahler metrics of constant scalar curvature on bundles over $\CP^{n−1}$}, Math. Ann. 291 (1991) 239-246.
\bibitem{Sm} B. Smyth, \emph{Differential geometry of complex hypersurfaces}, Ann. of Math. (2) 85 (1967), 246-266.
\bibitem{Ta} M. Takeuchi, \emph{Homogeneous K\"ahler submanifolds in complex projective spaces},
Japan J. Math. 4 (1978), 171-219.
\bibitem{Ts} K. Tsukada, \emph{Einstein-K\"ahler submanifolds with codimension two in a complex space form}, Math. Ann. 274 (1986), 503-516.
\bibitem{Yau}T. Yau \emph{Harmonic functions on complete Riemannian manifolds}, Comm. Pure Appl. Math. 28 (1975), 201-228.
\bibitem{xu1} H. Xu,
{\em A closed formula for the asymptotic expansion of the Bergman kernel},
Comm. Math. Phys. 314 (2012), no. 3, 555-585. 
\bibitem{zedda} M. Zedda, {\em Canonical metrics on Cartan-Hartogs domains},
Int. J. Geom. Methods Mod. Phys. 9 (2012), no. 1.
\bibitem{ze} S. Zelditch,
{\em Szeg\"{o} Kernels and a Theorem of Tian},
Internat. Math. Res. Notices  6 (1998), 317--331.
\end{thebibliography}
\end{document}